\newcommand\bigforall{\mbox{\LARGE $\mathsurround0pt\forall$}}
\newcommand{\ket}[1]                 	{{|#1\rangle}}
\newcommand{\Proj}               		{{\mathrm{Proj}}}
\newcommand{\cH}           		{\mathcal{H}}
\newcommand{\cD}           		{\mathcal{D}}
\newcommand{\fB}           			{\mathfrak{B}}
\newcommand{\PLQO}               	{{\mathsf{PLQO}}}
\newcommand{\CPL}                		{{\mathsf{CPL}}}
\newcommand{\RCOF}               	{{\mathsf{RCOF}}}
\newcommand{\inte}[1]                 	{{\mathop{\textstyle\int\!{#1}}}}
\newcommand{\mycirc}               	{\mathbin{@}}
\newcommand{\dummy}                {{\text{ }}}
\newcommand{\ds}                   {\displaystyle}
\newcommand{\qurtains}             {\hfill{QED}}
\newcommand{\curtains}             {\hfill{$\nabla$}}
\newtheorem{definition}{\vspace{1mm}Definition}[section]
\newtheorem{example}[definition]{\vspace{1mm}Example}
\newtheorem{pth}[definition]{\vspace{1mm}Theorem}
\newtheorem{prop}[definition]{\vspace{1mm}Proposition}
\newenvironment{proof}{\begin{trivlist}\item{\bf Proof:}}{\qurtains\end{trivlist}}
\newcommand{\Prob}					{\textsf{Prob}}
\newcommand{\lrule}[2] 				{\frac{\ \ #1\ \ }{\ \ #2\ \ }}
\newcommand{\pnats}                 {{\mathbb{N}^+}}
\newcommand{\nats}                  {{\mathbb N}}
\newcommand{\reals}                 {{\mathbb R}}
\newcommand{\comps}                 {{\mathbb C}}
\newcommand{\id}                    {{\textsf{id}}}
\newcommand{\ovl}[1]                {{\overline{#1}}}
\newcommand{\udl}[1]                {{\underline{#1}}}
\newcommand{\dom}                   {\mathop{\textsf{dom}}}
\newcommand{\comp}[1]               {{#1^{\textsf{c}}}}
\newcommand{\lrt}[1]                {{\langle#1\rangle}}
\newcommand{\inv}[1]                {{{#1}^{-1}}}
\newcommand{\da}                    {{{\downarrow}}}
\newcommand{\ua}                    {{{\uparrow}}}
\newcommand{\from}                  {{{\leftarrow}}}
\newcommand{\lverum}                {{\mathsf{t\!t}}}
\newcommand{\lfalsum}               {{\mathsf{f\!f}}}
\newcommand{\lM}                  	{\mathop{{\mathsf M \!}}}
\newcommand{\lO}                  	{\mathop{{\odot}}}
\newcommand{\lneg}                  {\mathop{\neg}}
\newcommand{\limp}                  {\mathbin{\supset}}
\newcommand{\leqv}                  {\mathbin{\equiv}}
\newcommand{\lconj}                 {\mathbin{\wedge}}
\newcommand{\ldisj}                 {\mathbin{\vee}}
\newcommand {\num}[1]              {{#1}}
\newcommand{\satfo}                  {\mathbin{\,\sat_{\mathsf{fo}}\,}}
\newcommand{\nsatfo}                 {\mathbin{\,\not\sat_{\mathsf{fo}}\,}}
\newcommand{\satc}                    {\sat_{\mathsf{c}}}
\newcommand{\entc}                    {\ent_{\mathsf{c}}}
\newcommand{\sat}                    {\Vdash}
\newcommand{\ent}                    {\vDash}
\newcommand{\der}                    {\vdash}
\newcommand{\nmycirc}               	{\mathbin{\not \!@}}
\newcommand {\Lc}                   {{L_{\mathsf{c}}}}
\newcommand {\HYP}		{\text{HYP}}
\newcommand {\TT}          	{\text{TT}}
\newcommand {\MP}            	{\text{MP}}
\newcommand {\RR}          	{\text{RR}}
\newcommand{\NC}               	{{\mathsf{NC}}}
\newcommand {\eB}        	       {{e\hspace*{-0.3mm}B}}
\begin{document}

\title{Probabilistic logic of quantum observations} 

\author{A.~Sernadas${}^1 \dag$\ \ J.~Rasga${}^1$\ \ C.~Sernadas${}^1$\ \ 
							L.~Alcácer${}^2$\ \ A.~B.~Henriques${}^3$\\[1.5mm]
{\scriptsize ${}^1$ Dep.~Matemática, Instituto Superior Técnico 
 and CMAF-CIO}\\[0.5mm]
{\scriptsize ${}^2$ Dep.~de Engenharia Química, Instituto Superior Técnico and 
Instituto de Telecomunicações}\\[0.5mm]
{\scriptsize ${}^3$ Dep.~Física, Instituto Superior Técnico and Centro Multidisciplinar de Astrofísica}\\[0mm]
{\scriptsize Universidade de Lisboa, Portugal}}

\date{March, 2018}

\maketitle

\begin{abstract} 
A probabilistic propositional logic, endowed with an epistemic component for asserting (non-)compatibility of diagonizable and bounded observables, is presented and illustrated for reasoning about the random results of projective measurements made on a given quantum state.~Simultaneous measurements are assumed to 
imply that the underlying observables are compatible.~A sound and weakly complete axiomatization is provided relying on the decidable first-order theory of real closed ordered fields.  The proposed logic is proved to be a conservative extension of classical propositional logic.
\\[2mm]
{\bf Keywords}: quantum logic, probabilistic logic, epistemic logic.\\[2mm]
{\bf AMS MSC2010}: 03G12, 81P10, 03B48.
\end{abstract}


\section{Introduction}\label{sec:introduction} 

The origin of quantum logic can be traced back to Garrett Birkhoff and John von Neumann's 1936 paper (see~\cite{neu:36}). Since then,  the question of reasoning in logic about quantum phenomena has been a recurrent research topic.  

The introduction of quantum mechanics concepts in formal logic is quite challenging since there is the need to accommodate the continuous nature of quantum mechanics within the discrete setting of symbolic reasoning. 

There are two main approaches to quantum logic. In the original approach, proposed in the seminal paper~\cite{neu:36}, each propositional symbol is an experimental proposition over a set of compatible observables, that is, a subset of the set of possible results of measuring them jointly. The denotation of such a propositional symbol is a closed linear subspace of the Hilbert space at hand. 
 The connectives of the logic reflect the operations in the lattice of all closed subspaces of a Hilbert space. As a consequence, some classical laws like distributivity of $\lconj$ and $\ldisj$ are no longer valid since the observables associated with the experimental propositions in those laws may not be necessarily compatible. If they were all compatible then the classical laws for $\lconj$ and $\ldisj$ would be true (see page 831 of~\cite{neu:36}).
There have been many investigations on quantum logic based on this approach (see, for instance,~\cite{har:81,tak:81,chi:02,pav:08,har:16,pav:16}), including   first-order frames and quantum set theory. 

The other approach, assumes {\it à priori}, that all observables involved are compatible with each other (see~\cite{mey:03,pmat:acs:05}) and focus on extensions of classical propositional logics  with new 
constructions for reasoning about particular quantum mechanics notions (inspired by the way probabilistic reasoning was introduced in classical logic~\cite{hal:03,acs:jfr:css:15}). For an overview of quantum logic according to this approach  and quantum structures, see~\cite{gab:07,gab:09}.

Herein, we address the problem of reasoning about the random results of projective measurements made on a given quantum state. Since the observables involved in the measurements are not necessarily compatible
we need also to reason about incompatibility of observables. 
This was achieved by defining an  epistemic-like 
operator $\lO \alpha$ with the intended meaning {\em it is possible to know the truth value of propositional formula} $\alpha$ (that is, the observables associated with the propositional symbols occurring in $\alpha$ are compatible). 

More precisely, we develop a logic ($\PLQO$) where each propositional symbol is an assertion on the result of 
a measurement (mathematically described by an observable) and a  formula is a Boolean assertion on the possible results of the measurements associated with its propositional symbols. The probability of such a formula $\alpha$ being true is expressed in our logic by $\inte{\alpha}$. 
The present paper generalizes the work in~\cite{pmat:acs:05} namely by presenting 
a more general semantics as well as a different calculus coping with both the probabilities and incompatibility
of observables. 
 
We note that the Boolean connectives inside a formula $\alpha$ have the same meaning of the connectives of the logic in~\cite{neu:36}.
  However, in our logic $\alpha$ only appears in the scope of either $\inte{\alpha}$ 
or in the scope of $\lO \alpha$.  When the observables in $\alpha$ are not compatible, the Boolean connectives in that formula do not play any role since  we state that  $\inte{\alpha} \,@ \, r$ is false. The same applies to $\lO \alpha$ since the evaluation of this formula corresponds only to checking whether or not the observables in $\alpha$ are compatible.

As a consequence, our logic is a conservative extension of classical propositional logic in the sense that we were able to prove that 
$$(\lO \alpha) \limp (\inte{\alpha} =1) \text{ is valid in  }  \PLQO\quad \text{ iff } \quad \alpha \text{ is classically valid}.$$
That is, $\alpha$ is always true iff $\alpha$ holds in every state of a quantum system whenever
the underlying observables are compatible. 

As in \cite{neu:36} (page 826), 
 we assume that each finite set of compatible observables is simultaneously diagonalizable, that is, there exists an orthonormal basis of the underlying Hilbert space composed by common eigenvectors of the observables. This is the case, for instance,  when the observables are defined by pairwise commuting compact self-adjoint operators (see~\cite{ish:95,bla:08,hal:13}).


In Section~\ref{sec:preliminaries}, we briefly review some relevant mathematical concepts underlying quantum mechanics. Moreover, we define our context namely by assuming, for the purpose of this work, that each observable is diagonizable and bounded besides being self-adjoint. With the aim of setting-up the probability context for joint measurements of compatible observables we define the observable resulting from the product 
of two given compatible observables. In Section~\ref{sec:qvars}, we
 introduce the notion of propositional quantum variable and analyze compatibility of propositional quantum variables. 
 
After that, in Section~\ref{sec:plqo}, we introduce
the language of $\PLQO$ after  recalling some notions of classical propositional logic as well as of the theory of real closed ordered fields. 
This language has two kinds of atomic formulas: $\lO \alpha$ as referred above,  and $\inte{\alpha} \mycirc p$ with the intended meaning of {\em it is possible to know the truth value of $\alpha$} and
 {\it the probability of $\alpha$ being true is $\mycirc p$} where $\mycirc$ is a relational operator.
Then, we introduce the notion of quantum interpretation structure  which is  composed by a Hilbert space representing the envisaged  quantum system, a unit vector representing the quantum state and the interpretation of each propositional symbol as a propositional quantum variable. Satisfaction and entailment are then defined. The section ends by presenting an axiomatization for $\PLQO$ capitalizing on the decidability of the theory of real closed ordered fields. This axiomatization is shown to be strongly sound in Section~\ref{sec:ssound} after establishing some auxiliary results. 

In Section~\ref{sec:wcomp}, we show that the axiomatization is weakly complete starting by proving a finite-dimensional model existence lemma relating satisfaction  of a formula in $\PLQO$ with satisfaction of the translation of the formula in
the $\RCOF$ interpretation structure having as domain the set $\reals$. Strong completeness is out of question since $\PLQO$ is not a compact logic as we explain in Section~\ref{sec:plqo}. Conservativeness of $\PLQO$ with respect to classical propositional logic is discussed in Section~\ref{sec:cons}. Finally, in Section~\ref{sec:epistemic}, we investigate the epistemic nature of observability and provide several examples. We conclude the paper in Section~\ref{sec:concrems} with the global assessment of what was done and future research directions.

\section{Preliminaries}\label{sec:preliminaries}

Our objective here is to provide a very brief account 
of what we need to know about 
the random results of projective measurements made on a quantum state.
For a full account see~\cite{rud:87,rud:91,ish:95,bla:08,hal:13}.

\subsection*{Basic notions}

Following  the mathematical formulation of quantum mechanics, as  introduced by Dirac in 1930, \cite{dir:67}, and by von Neumann in 1932, \cite{neu:32}, a quantum system is defined by a complex separable Hilbert space $\cH$. Moreover, a quantum state of the system is represented by a unit vector in $\cH$.

We start by reviewing the concept of projection-valued measure. 
Given a set $\Omega$ and a $\sigma$-algebra $\fB$ over $\Omega$, 
a {\it projection-valued measure} $\mu$ is a map from $\fB$ to the class of bounded operators over $\cH$ having the following properties.
\begin{itemize}

\item For each $S \in \fB$, $\mu(S)$ is an orthogonal projection, that is, $\mu(S)^2=\mu(S)$ and
$\mu(S)$ is a self-ajoint operator.

\item $\mu(\emptyset) = 0$ and $\mu(\Omega)= I_\cH$, where $I_\cH$ is the identity over $\cH$.
\item Given a countable family $\{S_j\}_{j\in J}$ of pairwise disjoint elements of $\fB$, let
	$$S = \bigcup_{j\in J}S_j.$$
	Then, for every $\psi \in \cH$,
	$$\mu(S) \psi= \sum_{j\in J} \mu(S_j)\psi,$$
	where the convergence of the sum is in the norm topology on $\cH$, whenever $J$ is countably infinite.
\item For every $S_1,S_2 \in \fB$,
	$$\mu(S_1 \cap S_2) = \mu(S_1) \mu(S_2).$$
\end{itemize}
Any projection-valued measure $\mu$ and $\psi \in \cH$ induce a positive real-valued measure
$$\mu_\psi: S \mapsto \langle \psi,\mu(S)\psi\rangle$$
 over $\fB$. 
Moreover, $\mu$ induces a unique linear map $$f \mapsto \int_\Omega f d\mu$$ 
that maps each measurable and complex function to an operator on $\cH$ such that
$$\left\langle \psi,\left( \int_\Omega f \; d\mu\right)\psi\right\rangle = \int_\Omega f \; d\mu_\psi,$$
for all $\psi$  in 
$$\left\{\psi \in \cH: \int_{\Omega} |f(\lambda)|^2 d\mu_\psi(\lambda) < \infty\right\}.$$
This operator satisfies the following properties:
\begin{itemize}
\item $\int_\Omega 1_S\; d\mu =\mu(S)$ for every $S \in \fB$;
\item $\int_\Omega fg \;d\mu=\left(\int_\Omega f \; d\mu\right) \left(\int_\Omega g \; d\mu\right)$.
\end{itemize}

A projective measurement  is defined by a self-adjoint operator (aka {\it observable}) 
acting on $\cH$.
The possible results of the measurement are the values in the spectrum $\Omega$ of the observable.
Clearly, $\Omega\subseteq\reals$ since the operator is self-adjoint.
From now on, let $\fB$ be the $\sigma$-field of the Borel subsets of $\Omega$ induced by the usual topology of $\reals$.

The spectral theorem tells us that a self-adjoint operator  $O$ induces a unique projection-valued measure $\mu$
 such that
$$\int_\Omega \id_\Omega \, d\mu = O,$$
which is called the {\it spectral representation} of $O$, where $\id_\Omega$ is the identity over $\Omega$.
Observe that we are not imposing that $O$ is a bounded operator. In the sequel, we denote by $\dom O \subseteq \cH$ the domain of  $O$.

The following definition is used in the sequel. If $O$ is a self-adjoint operator and
$f:\Omega \to \comps$ then  the {\it image} of $O$ by $f$ is the operator
$$(\ddag) \qquad f(O)=\int_\Omega f \; d\mu$$
where $\mu$ is the projection-valued measure induced by $O$.

\subsection*{Observables}

The notion of propositional quantum variable, introduced below,  assumes that an observable besides being self-adjoint is also diagonizable. 
We say that an operator $O:\cH \to \cH$ is {\it diagonizable} whenever 
\begin{itemize}

\item $O$ is a self-adjoint  operator;

\item $O$ has a countable pure point spectrum $\sigma_p(O)$;

\item $\{\Proj_\omega\}_{\omega \in \sigma_p(O)}$ is a complete system of eigenprojections of $O$, that is, 

\begin{itemize}
\item $\{\psi \in \dom O: \Proj_\omega \psi =\psi\}$ is the eigenspace of $O$ associated with the eigenvalue $\omega$, for each $\omega \in \sigma_p(O)$;

\item $\displaystyle \sum_{\omega \in \sigma_p(O)} \Proj_\omega=I_\cH$. 
\end{itemize}
\end{itemize}
Then, as shown in~\cite{bla:08},  
$$\int_\Omega \id_\Omega \, d\mu = O$$
where $\mu$ is the discrete projection-valued measure such that
$$\mu(S) = \sum_{\omega \in S \cap \sigma_p(O)} \chi_S(\omega) \Proj_\omega$$
for every $S \in \fB$. Moreover,
$$O\psi =\sum_{\omega \in \sigma_p(O)} \omega \Proj_\omega \psi$$
for every $\psi$ in the domain of $O$. That is, $O$ is a suboperator of $\sum_{\omega \in \Omega} \omega \Proj_\omega$. When $O$ is a bounded operator then 
$$O =\sum_{\omega \in \sigma_p(O)} \omega \Proj_\omega.$$
For example, all compact operators are diagonizable (see Riesz-Schauder Theorem, see~\cite{bla:08}, pp 79). Moreover, there are operators that although not compact are still diagonizable (see~\cite{hal:13} pp 124) as the Hamiltonian of the simple harmonic oscillator whose eigenvalues are $(n+\frac 12)\hbar \omega$ with $n \in \nats$ (see~\cite{ish:95} pp 60 and \cite{bla:08} pp 262). 

From now on, we assume that all observables are diagonizable. Moreover, in order to avoid the problem
of domains in unbounded operators, we also assume that all observables are bounded.

\begin{example} \em \label{ex:spin1}
Consider a spin-$\frac 12$ particle (see~\cite{dir:67,gri:14}). The Hilbert space for the spin is $\cH=\comps^2$.
Let $O_x,O_y,O_z: \cH \to \cH$ be the observables for the $x$, the $y$ and the $z$ components of the spin defined 
by the matrices:
$$O_x = \frac 12 \hbar \left[\begin{array}{cc}
0 & 1\\
1 & 0
\end{array}\right]\quad  O_y = \frac 12 \hbar \left[\begin{array}{cc}
0 & -i\\
i& \ \  0
\end{array}\right] \quad O_z=\frac 12 \hbar \left[\begin{array}{cc}
1 & \ \ 0\\
0& -1
\end{array}\right],$$
respectively. 
The  set of eigenvalues is $\{\frac \hbar 2, -\frac \hbar 2\}$ for the three operators. Moreover,
the orthonormal bases induced by $O_x$, $O_y$ and $O_z$ for $\comps^2$ are:
$$\left\{\frac {1}{\sqrt 2}  \left[\begin{array}{c}
1\\
1
\end{array}\right], \frac {1}{\sqrt 2} \left[\begin{array}{c}
\ 1\\
-1
\end{array}\right]\right\},\;  \left\{\frac {1}{\sqrt 2} \left[\begin{array}{c}
1\\
i
\end{array}\right], \frac {1}{\sqrt 2}  \left[\begin{array}{c}
\ 1\\
-i
\end{array}\right]\right\},\;\left\{\left[\begin{array}{c}
1\\
0
\end{array}\right],  \left[\begin{array}{c}
0\\
1
\end{array}\right]\right\},$$
respectively. 
So $O_x$, $O_y$ and $O_z$ are diagonalizable observables. For example, the spectral representation of $O_x$ is as follows:
$$O_x = \frac {1}{\sqrt 2}  \left[\begin{array}{c}
1\\
1
\end{array}\right] \frac \hbar 2 \left \langle\frac {1}{\sqrt 2}  \left[\begin{array}{c}
1\\
1
\end{array}\right] \, ,\, \cdot\,  \right \rangle -  \frac {1}{\sqrt 2}  \left[\begin{array}{c}
\ 1\\
-1
\end{array}\right]\frac \hbar 2 \left \langle\frac {1}{\sqrt 2}  \left[\begin{array}{c}
\ 1\\
- 1
\end{array}\right]\, , \, \cdot \, \right \rangle.$$
It can be easily seen that no pair of these operators commute. However,  the  total angular momentum $O^2$ defined as
$$O^2= O_x^2 + O_y^2 + O_z^2=\frac 34 \hbar \left[\begin{array}{cc}
1 & 0\\
0 & 1
\end{array}\right]$$
is such that 
$$O^2 O_x=O_x O^2 \quad O^2 O_y=O_y O^2 \quad O^2 O_z=O_z O^2.$$
Thus, $O^2$ and $O_x$, $O^2$ and $O_y$, and $O^2$ and $O_z$ commute and so, as we shall see later on, they are compatible observables.
Observe that $O^2$ is diagonizable as well.\curtains
\end{example}

As we shall see later on each observable and unit vector induce a probability space allowing the calculation
of the probabilities of obtaining a certain result after a measurement of that observable. When performing simultaneous measurements by a finite set of pairwise commuting observables we need to know the probability space where we can calculate the probabilities of the joint results.  This probability space can be induced by
the observable resulting from the product of the observables at hand  (see~\cite{ish:95}).

We now define the product of observables starting by introducing the notion of morphism between  observables.
Let $O_1$ and $O_2$ be observables and $\{\Proj_\omega^1\}_{\omega \in \sigma_p(O_1)}$ be a complete system of eigenprojections of $O_1$. A morphism $$f: O_1 \to O_2$$ is a map
$f: \sigma_p(O_1) \to \comps$ such that $f(O_1)=O_2$, where $f(O_1)$ is such that 
$$\displaystyle f(O_1)=\sum_{\omega \in \sigma_p(O_1)} f(\omega) \Proj_{\omega}.$$

\begin{prop} \em \label{prop:product}
Let $O'$ and $O''$ be a  pair of (diagonizable and bounded) commuting observables such that 
$$\displaystyle O'=\sum_{\omega' \in \sigma_p(O')} \omega' \Proj'_{\omega'} \quad \text{and} \quad O''=\sum_{\omega'' \in \sigma_p(O'')} \omega'' \Proj''_{\omega''},$$
where 
$\{\Proj'_{\omega'}\}_{\omega' \in \sigma_p(O')}$ and $\{\Proj''_{\omega''}\}_{\omega'' \in \sigma_p(O'')}$ are  complete systems of eigenprojections of $O'$ and $O''$, respectively.
Let
$$O' \times O''=\displaystyle \sum_{(\omega',\omega'') \in\sigma_p(O') \times \sigma_p(O'')} \lambda_{\omega'\omega''} \,\Proj'_{\omega'} \Proj''_{\omega''}$$
be an operator where $\lambda_{\omega'\omega''}\in (0,1)$ such that $\lambda_{\omega'_1\omega''_1} \neq \lambda_{\omega'_2\omega''_2}$ for every $\omega'_1,\omega'_2 \in \sigma_p(O')$ and $\omega''_1,\omega''_2 \in \sigma_p(O'')$ with
$(\omega'_1,\omega''_1) \neq (\omega'_2,\omega''_2)$.
Then, $$O' \times O''$$ is a diagonizable and bounded operator such that there are 
 morphisms $f': O' \times O'' \to O'$ and $f'':O' \times O'' \to O''$.
\end{prop}
\begin{proof}\ \\
We start by proving some auxiliary results.\\[1mm] 
(a)~$\Proj'_{\omega'}\Proj''_{\omega''}=\Proj''_{\omega''}\Proj'_{\omega'}$, for every $\omega' \in \sigma_p(O')$ and 
$\omega'' \in \sigma_p(O'')$.
Observe that, for every $\psi \in \cH$,
$$O'O''\Proj'_{\omega'}\psi = O''O'\Proj'_{\omega'}\psi=\omega' (O''\Proj'_{\omega'}\psi)$$
since $O'$ and $O''$ commute. 
Hence,  $O''\Proj'_{\omega'}\psi$ belongs to the eigenspace of $O'$ associated with $\omega'$.
Therefore,
$$\Proj'_{\omega'}O''\Proj'_{\omega'}=O''\Proj'_{\omega'}.$$
Moreover, 
$$\Proj'_{\omega'}O''\Proj'_{\omega'}=\Proj'_{\omega'}O''.$$
Therefore, 
$$O''\Proj'_{\omega'}=\Proj'_{\omega'} O''.$$
Similarly,  $$O'\Proj''_{\omega''}=\Proj''_{\omega''} O'.$$
Thus, 
$$O'' \Proj'_{\omega'}\Proj''_{\omega''}\psi = \Proj'_{\omega'} O''  \Proj''_{\omega''}\psi=
\omega'' \Proj'_{\omega'}  \Proj''_{\omega''}\psi$$
and so $\Proj'_{\omega'} \Proj''_{\omega''}\psi$ belongs to the eigenspace of $O''$ associated with $\omega''$. Hence,
$$\Proj''_{\omega''} \Proj'_{\omega'} \Proj''_{\omega''}\psi= \Proj'_{\omega'} \Proj''_{\omega''}\psi.$$
Moreover,
$$\Proj''_{\omega''} \Proj'_{\omega'} \Proj''_{\omega''}\psi=  \Proj''_{\omega''}\Proj'_{\omega'}\psi.$$
Then,
$$\Proj'_{\omega'}\Proj''_{\omega''}=\Proj''_{\omega''}\Proj'_{\omega'}.$$
(b)~$\Proj'_{\omega'}\Proj''_{\omega''}$ is a projection for every $\omega' \in \sigma_p(O')$ and 
$\omega'' \in \sigma_p(O'')$. In fact, 
$$(\Proj'_{\omega'}\Proj''_{\omega''})^2=\Proj'_{\omega'}\Proj''_{\omega''},$$  using  (a) 
and the fact   $(\Proj'_{\omega'})^2 = \Proj'_{\omega'}$ and $(\Proj''_{\omega''})^2=\Proj''_{\omega''}$. Moreover, 
$$(\Proj'_{\omega'}\Proj''_{\omega''})^*=\Proj'_{\omega'}\Proj''_{\omega''}$$
also by (a) and the fact  
$(\Proj'_{\omega'})^*=\Proj'_{\omega'}$ and 
$(\Proj''_{\omega''})^*=\Proj''_{\omega''}$.
\\[2mm]
We are now ready to prove the envisaged properties of $O' \times O''$.\\[1mm]
(1)~$O' \times O''$ is a self-adjoint operator. Observe that:
$$(O' \times O'')^*=\displaystyle \sum_{(\omega',\omega'') \in\sigma_p(O') \times \sigma_p(O'')} \lambda_{\omega'\omega''} \,(\Proj'_{\omega'} \Proj''_{\omega''})^*.$$
Note that,
$$(\Proj'_{\omega'} \Proj''_{\omega''})^*=\Proj'_{\omega'} \Proj''_{\omega''}$$
since $\Proj'_{\omega'} \Proj''_{\omega''}$ is a projection by (b). Therefore,
$$(O' \times O'')^*=O' \times O''.$$
(2)~$O' \times O''$ is diagonizable.\\[1mm]
(a)~It is immediate to see that the spectrum of $O' \times O''$ is countable.
\\[1mm]
(b)~$\{\Proj'_{\omega'}\Proj''_{\omega''}\psi: \psi \in \cH\}=\{\psi \in \cH: O' \times O''\psi=\lambda_{\omega'\omega''}\psi\}$, for each $\omega' \in \sigma_p(O')$ and $\omega'' \in \sigma_p(O'')$ such that
$\Proj'_{\omega'}\Proj''_{\omega''} \neq 0$.
That is, $$\{\Proj'_{\omega'}\Proj''_{\omega''}\psi: \psi \in \cH\}$$ is an eigenspace of $O' \times O''$ for the eigenvalue
$\lambda_{\omega'\omega''}$.
Let $\omega'_1 \in \sigma_p(O')$ and $\omega''_1 \in \sigma_p(O'')$ such that
$\Proj_{\omega'_1}\Proj_{\omega''_1} \neq 0$.\\[1mm]
$(\subseteq)$ Assume that $\Proj_{\omega'_1}\Proj_{\omega''_1}\psi \in \{\Proj_{\omega'_1}\Proj_{\omega''_1}\psi: \psi \in \cH\}$. Observe that, 
$$\begin{array}{l}
O' \times O''\Proj_{\omega'_1}\Proj_{\omega''_1}\psi =\\[5mm]
  \hspace*{20mm}  = \displaystyle \sum_{(\omega',\omega'') \in\sigma_p(O') \times \sigma_p(O'')} \lambda_{\omega'\omega''} \,\Proj'_{\omega'} \Proj_{\omega'_1} \Proj''_{\omega''}\Proj_{\omega''_1} \psi\\[7mm]
 \hspace*{20mm} = \lambda_{\omega'_1\omega''_1} \,\Proj_{\omega'_1}\Proj_{\omega''_1} \psi.
 \end{array}
 $$
 $(\supseteq)$ Assume that $\psi \in \{\psi \in \cH: O' \times O''\psi=\lambda_{\omega'_1\omega''_1}\psi\}$, that is, $\psi$ is such that
 $$O' \times O''\psi=\lambda_{\omega'_1\omega''_1}\psi.$$
 That is, 
 $$\displaystyle \left(\sum_{(\omega',\omega'') \in\sigma_p(O') \times \sigma_p(O'')} \lambda_{\omega'\omega''} \,\Proj'_{\omega'} \Proj''_{\omega''}\right)\psi=\lambda_{\omega'_1\omega''_1} \psi.$$
 Thus, $$\psi=\Proj_{\omega'_1} \Proj_{\omega''_1}\psi.$$
 Therefore, $$\psi \in \{\Proj_{\omega'_1}\Proj_{\omega''_1}\psi: \psi \in \cH\}.$$
(c)~$\Proj'_{\omega'}\Proj''_{\omega''}\psi$ is a simultaneous eigenvector of $O'$ and $O''$ with respect to $\omega'$ and $\omega''$, for each $\psi \in \cH$, $\omega' \in \sigma_p(O')$ and $\omega'' \in \sigma_p(O'')$ such that
$\Proj'_{\omega'}\Proj''_{\omega''} \neq 0$.
Indeed, let $\psi \in \cH$, $\omega'_1 \in \sigma_p(O')$ and $\omega''_1 \in \sigma_p(O'')$ such that
$\Proj_{\omega'_1}\Proj_{\omega''_1} \neq 0$. Then,
$$\begin{array}{rcl}
O'\Proj_{\omega'_1}\Proj_{\omega''_1}\psi &=&
\displaystyle \sum_{\omega' \in\sigma_p(O')} \omega'\,\Proj'_{\omega'} \Proj_{\omega'_1}\Proj_{\omega''_1} \psi\\[5mm]
 &=&  \omega'_1 \,\Proj_{\omega'_1}\Proj_{\omega''_1} \psi.
 \end{array}
 $$
(d)~There is a countable orthonormal basis of $\cH$ composed by eigenvectors of $O' \times O''$ which are common eigenvectors of both $O'$ and $O''$. 
Observe that
$$\dim \cH=\displaystyle 
		\sum_{\text{\shortstack[c]
{$(\omega', \omega'')\in \sigma_p(O') \times \sigma_p(O'')$\\
$\Proj'_{\omega'}\Proj''_{\omega''} \neq 0$
}
}} \dim\{\psi \in \cH: \Proj'_{\omega'} \Proj''_{\omega''}\psi\}
$$
by (b) and since $\dim\{\psi \in \cH: \Proj'_{\omega'} \Proj''_{\omega''}\psi\}=0$ whenever 
$\Proj'_{\omega'}\Proj''_{\omega''} = 0$.
Then, an orthonormal basis for $\cH$  is obtained  by joining the orthonormal bases of each eigenspace of $O' \times O''$.
Moreover, each vector in this basis is also an eigenvector of $O'$ and $O''$ as shown in (c). \\[1mm]
%
(3)~There are morphisms $f': O' \times O'' \to O'$ and $f'':O' \times O'' \to O''$. 
In fact let $f': \sigma_p(O' \times O'') \to \comps$ and $f'':  \sigma_p(O' \times O'') \to \comps$ be maps such that 
$f'(\lambda_{\omega'\omega''})=\omega'$ and $f''(\lambda_{\omega'\omega''})=\omega''$. 
We start by observing that
$$\sum_{\omega'' \in \sigma_p(O'')} \Proj''_{\omega''}=I_{\cH}.$$
Hence, 
$$
\begin{array}{rcl}
f'(O' \times O'') &=& \displaystyle \sum_{(\omega',\omega'') \in \sigma_p(O') \times \sigma_p(O'')} f'(\lambda_{\omega'\omega''}) \Proj'_{\omega'} \Proj''_{\omega''}\\[5mm]
&=& \displaystyle \sum_{\omega' \in \sigma_p(O')} \omega' \Proj'_{\omega'} \sum_{\omega'' \in \sigma_p(O'')}  \Proj''_{\omega''}\\[5mm]
&=& \displaystyle \sum_{\omega' \in \sigma_p(O')} \omega' \Proj'_{\omega'} I_\cH\\[5mm]
&=& O'.
\end{array}
$$
Similarly, $f''(O' \times O'')=O''$.\\[2mm]
(4)~$O' \times O''$ is a bounded operator. Indeed, its domain is $\cH$ and it is self-adjoint so it is bounded (see~\cite{hal:13} Corollary 9.9 in page 173).
\end{proof}

We can say that $O' \times O''$ is the observable where the observables $O'$ and $O''$ can be simultaneously measured. The notation $O' \times O''$ is justified since $O' \times O''$ is a binary product
in the category of  observables and their morphisms. Indeed, the universal property also holds. Let $g': O''' \to O'$ and $g'': O''' \to O''$ be 
morphisms. Let $g: \sigma_p(O''')\to \comps$ be defined as follows:
$g(\omega''')=\lambda_{g'(\omega''')g''(\omega''')}.$
Then, it is immediate to see that $g$ is the unique morphism from $O'''$ to $O' \times O''$ such that
$f' \circ g=g'$ and $f'' \circ g=g''$.

\section{Propositional quantum variables}\label{sec:qvars}

By a {\it propositional quantum variable} we mean a triple $Y=(O,\cD,\ua)$ 
where
$O$ is an observable acting on $\cH$ and
$\cD$ is a finite partition of $\sigma_p(O)$ and $\ua \in \cD$.
We say that $Y$ is defined on $\cH$ and takes values in $\cD$. For the purpose of this paper, the  set $\ua$ plays the role of verum and  $\sigma_p(O) \setminus \ua$,  denoted by $$\da,$$ plays the role of falsum.

We want to reason about the probability of the measurement of observable $O$ having a result in either $\ua$ or $\da$ when the  system is in a state $\psi$.
In fact, an observable $O$ and a state $\psi$ induce the probability space 
$(\sigma_p(O),\wp \sigma_p(O),\mu_\psi)$ where $\mu_\psi$ is the probability measure such that 
$$\mu_\psi(\omega)=\langle \psi,\mu(S)\psi\rangle= \langle \psi,\Proj_\omega \psi\rangle,$$
where $\mu$ is the discrete projection-valued measure induced by $O$ and $S$  is a Borel set such $S \cap \sigma_p(O) =\{\omega\}$. Observe that the value of any Borel set with this property by $\mu$ is the same.
Moreover, for every $D \subseteq \sigma_p(O)$,
$$\mu_\psi(D)= \sum_{\omega \in D} \mu_\psi(\omega)=\langle \psi,\mu(S)\psi\rangle=\langle \psi,\Proj_D\psi\rangle,$$ where 
$S$  is a Borel set such $S \cap \sigma_p(O) =D$ and
$$\Proj_D=\sum_{\omega \in D}  \Proj_\omega.$$

%

Now we are ready, given a quantum variable  $Y=(O,\cD,\ua)$ and a state $\psi$,  to define a random variable 
$$Y_\psi =\omega \mapsto D_\omega: \sigma_p(O) \to \cD,$$
on $(\sigma_p(O),\wp \sigma_p(O),\mu_\psi)$, where $D_\omega$ is the element of the partition $\cD$ to which $\omega$ belongs. 
Observe that, the probability distribution induced by $Y_\psi$ on $\cD$ is such that:
$$D \mapsto \Prob\left(Y_\psi = D\right) =  \mu_\psi(D): \cD \to [0,1].$$


We next address what happens when one wants to perform several simultaneous measurements on a given state of the quantum system.

\subsection*{Compatibility of propositional quantum variables}

The theory of quantum mechanics tells us that
it is possible to measure simultaneously a finite number of observables $O_1,\dots,O_n$ 
if only if they are {\it compatible}, that is, $O_i$ and $O_j$ commute for every $i,j=1,\dots,n$ with $i \neq j$. 

\begin{example} \em \label{ex:compspin}
As seen in Example~\ref{ex:spin1}, the observables $O^2$ and $O_x$, $O^2$ and $O_y$, and $O^2$ and $O_z$ commute and so they are compatible.\curtains

\end{example}

\begin{example} \em \label{ex:hydro1} Consider the electron of a hydrogen atom. Then, its
Hamiltonian $H$, total angular momentum $L^2$, $z$-component $L_z$ of the angular momentum
and $z$-component $S_z$ of the spin operator are compatible (see~\cite{ish:95} pp 119).
Denote the state vectors of a hydrogen atom by $\psi_{n\ell m \lambda}$, where
$n >0$, $\ell =0,\dots,n-1$ and $m=-\ell,\dots,\ell+1$ are natural numbers and $\lambda =\pm \frac 12$. 
These are simultaneous eigenvectors of those observables. The eigenvalues are as follows: 
\begin{itemize}
\item $H \psi_{n\ell m \lambda}=E_n \psi_{n\ell m \lambda}$ where $E_n=-\frac{R}{n^2}$ and $R$ is the Rydberg constant (see~\cite{hal:13} pp 8);
\item $L^2 \psi_{n\ell m \lambda}= \ell(\ell+1)\hbar^2\psi_{n\ell m \lambda}$;
\item $L_z \psi_{n\ell m \lambda}= m\hbar\psi_{n\ell m \lambda}$;
\item $S_z \psi_{n\ell m \lambda}= \lambda \hbar\psi_{n\ell m \lambda}$. \curtains
\end{itemize}
\end{example}

When performing simultaneous measurements of compatible  observables $O'$ and $O''$ on a state $\psi$, the probability space where we can calculate the probabilities of the joint results is 
$$(\sigma_p(O' \times O''),\wp \sigma_p(O' \times O''),\mu_\psi)$$
 induced by $O' \times O''$. Note that,
 $$\mu_\psi(\lambda_{\omega'\omega''}) = \langle \psi,\Proj'_{\omega'} \Proj''_{\omega''}\psi\rangle.$$ 
 Moreover, for any 
 $$D_{D'D''}=\{\lambda_{\omega'\omega''}: \omega' \in D' \text{ and } \omega'' \in D''\}$$ where
 $D' \subseteq \sigma_p(O')$ and $D'' \subseteq \sigma_p(O'')$, we have

 \begin{align*}  
 \mu_\psi(D_{D'D''})=
 \langle \psi,\sum_{\lambda_{\omega'\omega''}\in D_{D'D''}}\Proj'_{\omega'} \Proj''_{\omega''}\psi\rangle
\displaybreak[1] \\[1mm]
& \hspace*{-57mm} \displaystyle 
=  \langle \psi,\sum_{\omega' \in D'} \sum_{\omega'' \in D''} \Proj'_{\omega'} \Proj''_{\omega''}\psi\rangle
\displaybreak[1]\\[1mm]
& \hspace*{-57mm} \displaystyle 
=  \langle \psi,\sum_{\omega' \in D'} \Proj'_{\omega'}  \sum_{\omega'' \in D''} \Proj''_{\omega''}\psi\rangle
\displaybreak[1]\\[1mm]
& \hspace*{-57mm} \displaystyle 
=  \langle \psi,\left(\sum_{\omega' \in D'} \Proj'_{\omega'}\right)   \Proj''_{D''}\psi\rangle
\displaybreak[1]\\[1mm]
& \hspace*{-57mm} \displaystyle 
=  \langle \psi,\Proj'_{D'}   \Proj''_{D''}\psi\rangle.
\displaybreak[1]
\end{align*}
 
 We proceed now with the task of bringing the notion of compatibility to the realm of propositional quantum variables. We say that pqvs $Y'=(O',\cD',\ua')$ and $Y''=(O'',\cD'',\ua'')$ are {\it compatible} if  $O'$ and $O''$ commute. As expected, we say that a finite set of quantum variables is {\it incompatible} if
it contains (at least) an incompatible pair.
Otherwise, the set is said to be {\it compatible}.
Clearly, the empty set is compatible and so is each singleton set.
Moreover, every superset of an incompatible set of quantum variables is also incompatible.

Given compatible pqvs $Y'=(O',\cD',\ua')$ and $Y''=(O'',\cD'',\ua'')$, we consider the pqv
$$Y=(O' \times O'',\cD,\ua)$$
where
$\cD$ is composed by the sets 
$$D_{D'D''}=\{\lambda_{\omega'\omega''}: \omega' \in D' \text{ and } \omega'' \in D''\}$$
for each $D' \in \cD'$ and  $D'' \in \cD''$, and
$\ua$ is $D_{\ua'\ua''}$.

The joint probability distribution of the random variables induced by $Y'$ and $Y''$  and quantum state $\psi$ is such that, for each $\lrt{D',D''} \in \cD' \times \cD''$,
$$\Prob \left(\lrt{Y'_\psi,Y''_\psi} = \lrt{D',D''}\right) =  
    \Prob(Y_\psi=D_{D'D''})=
	\langle \psi, \Proj'_{D'}\Proj''_{D''}\psi\rangle.$$

If a pair of quantum variables is incompatible,
then we cannot, in general,  use them simultaneously  for observing  the quantum system at hand. 
This fact will have an enormous impact on the nature of the envisaged logic. 
Namely, it is at the heart of the need for an epistemic component in the logic.
Actually, this is the key difference between observing a probabilistic system and observing a quantum system.

\section{Language, semantics and calculi}\label{sec:plqo}

The objective of this section is to define an enrichment of classical propositional logic ($\CPL$) to reason about the random results of simultaneously applying observables to a given quantum state. 

The idea is to add as little as possible to the classical propositional language.
To this end, we add a constructor for expressing that (the truth value of) a (classical) formula representing an assertion on the results of simultaneous measurements is knowable and a symbolic construct for constraining the probability of such a formula.

Concerning $\CPL$ we need to adopt some notation and review some concepts and facts.

Let $\Lc$ be the classical language
built with the propositional symbols in $B=\{B_j:j\in\nats\}$ and with $\lverum$ (verum),
using the connectives $\lneg$ (negation) and $\limp$ (implication). 
The other connectives as well as $\lfalsum$ (falsum) are introduced as abbreviations.
Concerning non-primitive connectives, 
we use $\lconj$ (conjunction), $\ldisj$ (disjunction) and $\leqv$ (equivalence) in the paper. 
It becomes handy to use $B_\alpha$ for the set of propositional symbols that occur in a formula $\alpha$.

Recall that a valuation (on $A$) is a map $v : A \to \{0,1\}$ where $A \subseteq B$. Given a valuation $v: A \to \{0,1\}$ and $A' \subseteq A$
we denote by $v|_{A'}$ the restricton of $v$ to $A'$.
We use $v \satc \alpha$ for stating that valuation $v: A \to \{0,1\}$, with $B_\alpha \subseteq A$, satisfies formula $\alpha$ 
and
$\Delta \entc \alpha$ for stating that $\Delta$ classically entails $\alpha$.
One says that $\alpha$ is a tautology if $\entc \alpha$.

Given a valuation $v$ on $A\subseteq B$ and $B_j \in A$, we denote by
$$v^{B_j \mapsfrom d}$$ the valuation such that 
$$v^{B_j \mapsfrom d}(B_i)=\begin{cases}
v(B_i) & \text{ if } i \neq j\\
d & \text{ otherwise.}
\end{cases}
$$

A propositional symbol $B_j$ is said to be {\it essential} for a formula $\alpha$ whenever
$B_j \in B_\alpha$ and there exists a valuation $v$ on $B_\alpha$ such that
 $$v^{B_j \mapsfrom 0} \satc \alpha \quad \text{ iff } \quad v^{B_j \mapsfrom 1} \not \satc \alpha.$$
 We denote by
 $$\eB_\alpha$$
 the set of essential propositional symbols for $\alpha$.
 Observe that $\eB_{\alpha}$ contains only the propositional symbols 
that contribute in an essential way to the truth value of $\alpha$.
Clearly, $\eB_{\alpha} \subseteq B_\alpha$.
Moreover, $\alpha$ is a tautology if and only if $\eB_{\alpha}=\emptyset$. 
That is, the truth values of the propositional symbols  are irrelevant for determining the truth value of $\alpha$.
The set $\eB_\alpha$ can be effectively calculated by finding the algebraic normal form of $\alpha$ and considering its propositional symbols (all of them essential). For more details on algorithms for obtaining the algebraic normal form
see~\cite{zhe:27,ste:pos:07}. So, from now on, we only work with formulas in the algebraic normal form. Hence, 
$$\eB_\alpha=B_\alpha.$$

Before proceeding with the presentation of the envisaged probabilistic logic of quantum observations $\PLQO$, 
we need to adopt some notation and review some facts
concerning also the first-order theory of real closed ordered fields ($\RCOF$),
having in mind the use of its terms for denoting probabilities and other quantities.

Recall that the first-order signature of $\RCOF$ contains 
the constants $\num0$ and $\num1$, 
the unary function symbol $-$, 
the binary function symbols $+$ and $\times$, 
and the binary predicate symbols $=$ and $<$. 
We take the set $$X=X_\nats \cup X_\Lc \cup \{x_{A'}: A' \subseteq B, |A'|=2\},$$
where $X_\nats = \{ x_k: k \in \nats \}$ and $X_\Lc = \{ x_\alpha: \alpha \in \Lc \}$,
as the set of variables. 
By $T^\circ_\RCOF$ we mean the set of terms in $\RCOF$ that do not use variables in $X_\Lc \cup \{x_{A'}: A' \subseteq B, |A'|=2\}$ 
while $T_\RCOF$ denotes the set of all terms.
As we shall see, 
the variables in $X_\Lc$ become handy as fresh variables in the proposed axiomatization $\PLQO$,
for representing within the language of $\RCOF$ the probability of $\alpha$ being true. On the other hand, the variables in $\{x_{A'}: A' \subseteq B, |A'|=2\}$
are used for representing whether or not the observables associated with the propositional symbols in $A'$ are compatible.

As usual, we may write $t_1 \neq t_2$ for $\lneg (t_1 = t_2)$, 
$t_1 \leq t_2$ for $(t_1 < t_2) \ldisj (t_1=t_2)$, $t_1> t_2$ for $\lneg (t_1 \leq t_2)$,
$t_1\geq t_2$ for $\lneg (t_1 < t_2)$,
$t_1\,t_2$ for $t_1 \times t_2$ and $t^n$ for 
$$\underbrace{t \times \dots \times t}_{n \text{ times}}.$$
Furthermore, we also use the following abbreviations for any given $m\in\pnats$ and $n\in\nats$:
\begin{itemize}

\item $\num m$ for $\underbrace{\num1+\dots +\num1}_{\text{addition of } m \text{ units}}$;

\item $\inv{\num m}$ for the unique $z$ such that $\num m \times z = \num1$;

\item $\ds\frac{\num n}{\num m}$ for $\inv{\num m} \times \num n$.

\end{itemize}
The last two abbreviations might be extended to other terms, but we need them only for numerals. 
We do not notationally distinguish, for the sake of a lighter style, between a natural number and the corresponding numeral and between each predicate symbol and its denotation.

In order to avoid confusion with the other notions of satisfaction used herein, we adopt $\satfo$ for denoting satisfaction in first-order logic (over the language of $\RCOF$).

Recall also that the theory $\RCOF$ is decidable~\cite{tar:51}. This fact will be put to good use in the axiomatization for $\PLQO$ (presented at the end of this section).
Furthermore, every model of $\RCOF$ satisfies a formula in the $\RCOF$ language if and only the formula is a theorem of $\RCOF$ (Corollary~3.3.16 in~\cite{mar:02}).
We shall take advantage of this result in the semantics of $\PLQO$ for adopting the ordered field $\reals$ of the real numbers as the model of $\RCOF$.

\subsection*{Language}

The language $L_\PLQO$ of the probabilistic logic of quantum observations $\PLQO$ is inductively defined as follows:

\begin{itemize}
\item $\lO \alpha \in L_\PLQO$ 
		when $\alpha \in \Lc$;
\item $\inte{\alpha} \mycirc p \in L_\PLQO$ 
		when $\alpha \in \Lc$,  $p \in T^\circ_\RCOF$ and $\mycirc \in \{=,<,\neq,\leq, >, \geq\}$;
\item $\varphi_1 \limp \varphi_2 \in L_\PLQO$ 
		when $\varphi_1,\varphi_2 \in L_\PLQO$.
\end{itemize}

Propositional abbreviations can be introduced as usual. For instance,
$$\lneg \varphi \;\text{ for }\; \varphi \limp (\inte{\lverum} < 1)$$
and similarly for $\lconj$, $\ldisj$ and $\leqv$.

\subsection*{Semantics}

Given a term $t$ in $T_\RCOF$ and an assignment $\rho: X \to \reals$, 
we write $t^{\reals\rho}$ for the denotation of term $t$ in $\reals$ for $\rho$. 
When $t$ does not contain variables we may use $t^\reals$ for the denotation of $t$ in $\reals$.

By a {\it quantum interpretation structure} (in short, {\it quantum structure}) we mean a triple
$$I=(\cH,\psi,B_j \mapsto \udl B_j)$$ 
where:
\begin{itemize}
\item $\cH$ is a complex separable Hilbert space;
\item $\psi$ is a unit vector in $\cH$;
\item $B_j \mapsto \udl B_j$ is a map that associates to each propositional symbol $B_j$
	a propositional quantum variable $\udl B_j=(O_j,\cD_j,\ua_j)$ defined on $\cH$ where $|\cD_j|=2$.
\end{itemize}

In the sequel, given $A \subseteq B$ and a valuation $v: A \to \{0,1\}$, let 
$$D^v_{{j}} = 
\begin{cases}
	\ua_{j} &\textrm{if } v(B_{j})=1\\
	\da_{j} &\textrm{otherwise}
\end{cases}$$
for each $B_j \in A$.
Therefore, we are taking $\ua$ as true and $\da$ as false. 

 Given $\alpha \in \Lc$, let $\udl\alpha = \{ \udl B_j: B_j \in B_{\alpha} \}.$
We say that $\alpha$ is an {\it $I$-observable formula} 
if $\udl\alpha$ is a compatible set of propositional quantum variables. Observe that only the essential propositional symbols are relevant.

A quantum structure $I$ assigns a probability to any formula $\alpha \in \Lc$ such that $\udl{\alpha}$ is a compatible set 
as follows:
 \begin{align*}  
\Prob_I(\alpha) = 
		\sum_{\text{\shortstack[c]
{$v: B_{\alpha} \to \{0,1\}$\\
$v\satc\alpha$
}
}} 
		\Prob \left( \lrt{ (\udl B_{j_1})_{\psi},\dots,(\udl B_{j_k})_{\psi}} =  
					\lrt{ D^v_{{j_1}},\dots,D^v_{{j_k}} } \right)
\displaybreak[1] \\[1mm]
& \hspace*{-102mm} \displaystyle 
= \sum_{\text{\shortstack[c]
{$v: B_{\alpha} \to \{0,1\}$\\
$v\satc\alpha$
}
}}   \langle \psi,\Proj^{j_1}_{D^v_{{j_1}}} \dots  \Proj^{j_k}_{D^v_{{j_k}}}\psi\rangle.
\displaybreak[1]
\end{align*}
assuming that $B_{\alpha} = \{B_{j_1},\dots,B_{j_k}\}$. We prove in Section~\ref{sec:ssound} that $\Prob_I$ is indeed a {\it probability assignment} (see~\cite{adam:98,acs:jfr:css:15}) when the formula at hand involves only  a compatible set of observables according to $I$, that is,  a map $P$ on $\Lc$ satisfying the following (Adams) principles:
\begin{itemize}
\item [P1] $0 \leq P(\alpha) \leq 1$;
\item [P2] If $\entc \alpha$ then $P(\alpha)=1$;
\item [P3] If $\alpha \entc \beta$ then $P(\alpha) \leq P(\beta)$;
\item [P4] If $\entc \lneg (\beta \lconj \alpha)$ then $P(\beta \ldisj \alpha)= P(\beta) + P(\alpha)$.
\end{itemize} 
In~\cite{acs:jfr:css:15} we showed that P3 follows from P1, P2 and P4. 

As we proceed to explain, a quantum structure $I$ tells us, among other things, which are the knowable classical formulas 
in the sense that they do not involve incompatible sets of quantum variables.

Given a quantum structure $I$ and an assignment $\rho: X \to \reals$,
{\it satisfaction of  formulas} by $I$ and $\rho$  is inductively defined as follows:
\begin{itemize}

\item $I\rho \sat \lO\alpha$ whenever $\alpha$ is $I$-observable;

\item $I\rho \sat \inte{\alpha} \mycirc p$ whenever 
	$I\rho \sat \lO\alpha$
	and
	$\Prob_I(\alpha) \mycirc p^{\reals\rho}$;
	
\item $I\rho \sat \varphi_1 \limp \varphi_2$ whenever $I\rho \not \sat \varphi_1$ or $I\rho \sat  \varphi_2$.
\end{itemize}
We may omit the reference to the assignment $\rho$ whenever the $\PLQO$ formula in hand does not include variables.
For examples illustrating satisfaction, see Section~\ref{sec:epistemic}.

Let $\Gamma \subseteq L_\PLQO$ and $\varphi \in L_\PLQO$. We say that $\Gamma$ {\it entails} $\varphi$, written $\Gamma \ent \varphi$,
whenever, for every quantum structure $I$ and assignment $\rho$,
if $I\rho \sat \gamma$ for each $\gamma \in \Gamma$ then $I\rho \sat \varphi$.
As expected, $\varphi$ is said to be {\it valid} when $\ent \varphi$.



Observe that entailment in $\PLQO$ is not compact. Indeed, since $\reals$ is Archimedean and $\lO B_1$ is valid,
$$\left\{\inte{B_1} \leq \frac 1n : n \in \nats\right\} \ent \inte{B_1}=0.$$
However, there is no finite subset $\Psi$ of $\{\inte{B_1} \leq \frac 1n : n \in \nats\}$ such that
$$\Psi \ent \inte{B_1}=0.$$

\subsection*{Calculus}

The $\PLQO$ calculus combines propositional reasoning with $\RCOF$ reasoning. We intend to use the $\RCOF$ reasoning to a minimum, namely to prove assertions like
$$\left(\beta_1 \lconj \cdots \lconj   \beta_k\right) \limp
									\beta$$
where $\beta_1,\dots,\beta_k, \beta$ are $\PLQO$ literals. To this end, we need to introduce some preliminary material.
Given $U \subseteq A \subseteq B$ where $A$ is finite, we use the abbreviation
$$\phi_A^U \quad\text{for}\quad
		\left(\bigwedge_{B_j \in U} B_j\right) \lconj \left(\bigwedge_{B_j \in A \setminus U} \lneg B_j\right),$$
		 when $A \neq \emptyset$,
and,  for $\lverum$, otherwise. Note that each classical formula $\phi^U_A$ identifies the valuation on $A$ that assigns true to the propositional symbols in $U$ and assigns false to the propositional symbols in $A \setminus U$. 

Given a propositional formula $\alpha$, we denote by 
$$Q^{\lO \alpha}$$
the $\RCOF$ formula
 $$\bigwedge_{\text{\shortstack[c]
{$A' \subseteq B_\alpha$\\
$|A'|=2$
}
}}x_{A'}=0,$$
and by
$$Q^{{\inte{}}{\alpha}}$$
the $\RCOF$ formula
\begin{align*} \left(\bigwedge_{U\subseteq {B_\alpha}}
0 \leq x_{\phi_{{B_\alpha}}^{U} } \leq 1\right)\lconj
	\left(\sum_{U\subseteq {B_\alpha}} x_{\phi_{{B_\alpha}}^{U} }=1\right)
	\lconj  \left(\bigwedge_{\text{\shortstack[c]
{$A' \subseteq {B_\alpha}$\\
$U' \subseteq A'$
}
}} x_{\phi_{A'}^{U'} } = \sum_{\text{\shortstack[c]
{$U \subseteq {B_\alpha}$\\
$U \cap A' = U'$
}
}} x_{\phi_{{B_\alpha}}^{U}}\right) \lconj 
\displaybreak[1] \\[1mm]
& \hspace*{-97mm} \displaystyle  
\left( x_{\alpha} = 
\sum_{\text{\shortstack[c]
{$v:B_{\alpha} \to \{0,1\}$\\
$v\satc\alpha$
}
}}  x_{\phi^{\{B_j \in B_{{\alpha}}: v(B_j)=1\}}_{B_{{\alpha}}}}\right).
\end{align*}
We proceed to explain the meaning of each conjunct of formula $Q^{{\inte{}}{\alpha}}$. The idea is that
each variable $x_\gamma$ represents the probability of $\gamma$ being true in a particular quantum structure which, according to the definition,
is the sum of the probabilities of obtaining the results induced by the valuations that satisfy $\gamma$ when performing the measurements
associated with the formula (assuming that the underlying observables are compatible). When $\gamma$ is
$\phi^U_{B_\alpha}$ there is a unique valuation on ${B_\alpha}$ that satisfies $\gamma$ and so we can look  at $x_\gamma$ as the probability 
of obtaining the results induced by that valuation. 
The conjunct
$$\bigwedge_{U\subseteq {B_\alpha}}
0 \leq x_{\phi_{{B_\alpha}}^{U} } \leq 1$$
imposes that the probability of obtaining the results induced by each possible valuation on $A$ is in the interval $[0,1]$. The conjunct
$$\sum_{U\subseteq {B_\alpha}} x_{\phi_{{B_\alpha}}^{U} }=1$$
expresses that the sum of those probabilities is one. The conjunct
$$x_{\phi_{A'}^{U'} } = \sum_{\text{\shortstack[c]
{$U \subseteq {B_\alpha}$\\
$U \cap A' = U'$
}
}} x_{\phi_{{B_\alpha}}^{U}}$$
is a marginal probability condition, and the conjunct
$$x_{\alpha} = 
\sum_{\text{\shortstack[c]
{$v:B_{\alpha} \to \{0,1\}$\\
$v\satc\alpha$
}
}}  x_{\phi^{\{B_j \in B_{{\alpha}}: v(B_j)=1\}}_{B_{{\alpha}}}}$$
expresses that the probability of $\alpha_i$ being true in a quantum structure is the sum of the probabilities of obtaining the results induced
by each valuation that satisfies the formula. 

With respect to $Q^{\lO \alpha}$, each variable $x_{A'}$ is intended to represent whether or not the observables associated with the propositional symbols in $A'$ are compatible.

We are now ready to define an equivalent  $\RCOF$ formula for each $\PLQO$ formula. 
Given a formula $\beta$ of $\PLQO$ of the form $\inte{\alpha} \mycirc p$ or $\lO \alpha$ or $\lneg \lO \alpha$, let $\beta^{\RCOF}$ be the $\RCOF$ formula 
$$
\begin{cases}
Q^{\lO \alpha} \lconj {Q^{{\inte{}}{\alpha}}} \lconj (x_{\alpha} \mycirc p) & \text{if } \beta \text{ is } \inte{\alpha} \mycirc p\\[1mm]
 Q^{\lO \alpha}  \lconj {Q^{{\inte{}}{\alpha}}}& \text{if } \beta \text{ is } \lO \alpha\\[1mm]
 \lneg Q^{\lO \alpha} & \text{if } \beta \text{ is } \lneg \lO \alpha.
\end{cases}
$$

Moreover, we define $\varphi^{\RCOF}$, where $\varphi$ is a formula of $\PLQO$, inductively as follows:
\begin{itemize}
\item $\varphi^{\RCOF}$ is $\beta^{\RCOF}$ whenever $\varphi$ is either $\inte{\alpha} \mycirc p$ or $\lO \alpha$ or $\lneg \lO \alpha$;
\item $\varphi^{\RCOF}$ is $\varphi_1^{\RCOF} \limp \varphi_2^{\RCOF}$ whenever $\varphi$ is $\varphi_1 \limp \varphi_2$ and is not of the form $\lneg \lO \alpha$.
\end{itemize}

The calculus for $\PLQO$ embodies classical reasoning and makes use of $\RCOF$ theorems.
It contains the following axioms and rules, assuming that $\varphi_1,\varphi_2$ are formulas and  
$\beta_1, \cdots, \beta_k, \beta$ are formulas of the form 
$\lO \alpha$, or $\lneg \lO \alpha$ or  $\inte{\alpha} \mycirc p$:

\begin{itemize}

\item[$\TT$] $\ds\lrule{}{\varphi_1}$\\[2mm]
provided that
$\varphi_1$ is a tautological formula;\\[-3mm]

\item[$\MP$] $\ds\lrule{\begin{array}{l}
							\varphi_1 \\
							\varphi_1 \limp\varphi_2
					\end{array}}
					{\varphi_2}$;\\[-1mm]
 
\item[$\RR$] $\ds\lrule{}
{(\beta_1 \lconj \cdots \lconj  \beta_k) \limp \beta}$\;
provided that \\[2mm]
$$\ds\bigforall
		\left(\left(
\bigwedge_{j=1}^k \beta_j^{\RCOF}\right)
					\limp \beta^{\RCOF}\right)$$
 is a theorem of $\RCOF$.	

\end{itemize}

Axioms $\TT$ and rule $\MP$ extend classical reasoning to $\PLQO$ formulas. 
 Rule $\RR$ import all we need from $\RCOF$ for deriving $\PLQO$ assertions on quantum observability.
 
\subsection*{Examples}
				
We start by showing that
$$\der{(\lO\alpha_1) \leqv (\lO\alpha_2)}$$
provided that $\entc \alpha_1 \leqv \alpha_2$, that is, observability is preserved by classical equivalence. 
Observe that
$\eB_{\alpha_1}=\eB_{\alpha_2}$
since $\entc \alpha_1 \leqv \alpha_2$. Thus,  $B_{\alpha_1}=B_{\alpha_2}$.
Hence,
$$Q^{\lO \alpha_1} \text{ is } Q^{\lO \alpha_2} \text{ and } {Q^{{\inte{}}{\alpha_1}}} \text{ is } {Q^{{\inte{}}{\alpha_2}}}$$
and so, $$\ds\bigforall
		\left(
		(Q^{\lO \alpha_1} \lconj {Q^{{\inte{}}{\alpha_1}}})
					\limp (Q^{\lO \alpha_2}\lconj {Q^{{\inte{}}{\alpha_2}}})\right)$$
					and
$$\ds\bigforall
		\left(
		(Q^{\lO \alpha_2} \lconj {Q^{{\inte{}}{\alpha_2}}})
					\limp (Q^{\lO \alpha_1}\lconj {Q^{{\inte{}}{\alpha_1}}})\right)$$	
 are theorems of $\RCOF$. Thus, we have the derivation in Figure~\ref{fig:der1}.
\begin{figure}[t]
$$\begin{array}{rlr}
	1 & \ds\bigforall
		\left(
		(Q^{\lO \alpha_1} \lconj {Q^{{\inte{}}{\alpha_1}}})
					\limp (Q^{\lO \alpha_2}\lconj {Q^{{\inte{}}{\alpha_2}}})\right) &\RCOF\\[3mm]			              
	2 & (\lO\alpha_1) \limp (\lO\alpha_2)	& \RR \; 1\\[3mm]
	3 &  \ds\bigforall
		\left(
		(Q^{\lO \alpha_2} \lconj {Q^{{\inte{}}{\alpha_2}}})
					\limp (Q^{\lO \alpha_1}\lconj {Q^{{\inte{}}{\alpha_1}}})\right) &  \RCOF\\[3mm]
	4 &  (\lO\alpha_2) \limp (\lO\alpha_1)	& \RR \; 3\\[3mm]
	5 &  ((\lO\alpha_1) \limp (\lO\alpha_2)) \limp (( (\lO\alpha_2) \limp (\lO\alpha_1)	) \limp ((\lO\alpha_1) \leqv (\lO\alpha_2)))& \TT	\\[3mm]
	6 & 	( (\lO\alpha_2) \limp (\lO\alpha_1)) \limp ((\lO\alpha_1) \leqv (\lO\alpha_2)) & \MP \; 2,5\\[3mm]
	7 & (\lO\alpha_1) \leqv (\lO\alpha_2) & \MP\;  4,6
\end{array}$$\vspace*{-6mm}
\caption{$\dummy\der (\lO\alpha_1) \leqv (\lO\alpha_2)$ provided that $\entc \alpha_1 \leqv \alpha_2$.}\label{fig:der1}
\end{figure}

As another example, we now prove that 
$$\der \lO \lverum,$$
that is, that $\lverum$ is always observable.
Indeed,
$$\ds\bigforall
		\left( 
		\lverum
					\limp (\lO \lverum)^{\RCOF}\right)$$
 is a theorem of $\RCOF$ since
 $(\lO \lverum)^{\RCOF}$ is $Q^{\lO\lverum} \lconj {Q^{{\inte{}}{\lverum}}}$ which is $\lverum$.
Therefore, by $\RR$, 
					we have that $\lO \lverum$ is a theorem.
					
As a consequence of these two examples, every tautology is observable taking into account the two given examples. 

Finally, we prove that
$$\lO (B_1 \lconj B_2)  \der\inte{(B_1 \lconj B_2)} \geq 0.$$
We start by verifying that 
$$\ds\bigforall
		\left(
		(\lO (B_1 \lconj B_2))^\RCOF
					\limp (\inte{(B_1 \lconj B_2)}\geq 0)^\RCOF\right)$$
is a theorem of $\RCOF$. 
Indeed,
\begin{itemize}
\item $(\lO (B_1 \lconj B_2))^\RCOF$ is $Q^{\lO (B_1 \lconj B_2)} \lconj {Q^{{\inte{}}{B_1 \lconj B_2}}}$;
\item $(\inte{(B_1 \lconj B_2)}\geq 0)^\RCOF$ is 
$Q^{\lO (B_1 \lconj B_2)} \lconj Q^{{\inte{}}{B_1 \lconj B_2}} \lconj (x_{B_1 \lconj B_2} \geq 0)$.

\end{itemize}
Thus, we have the derivation in Figure~\ref{fig:der2}.
\begin{figure}[t]
$$\begin{array}{rlr}
	1 & \lO (B_1 \lconj B_2) &\HYP\\[3mm]			              
	2 & \ds\bigforall
		\left(
		\lO (B_1 \lconj B_2))^\RCOF
					\limp \right.
					\left. (\inte{(B_1 \lconj B_2)}\geq 0)^\RCOF\right)	& \RCOF\\[3mm]
	3 & (\lO (B_1 \lconj B_2)) \limp (\inte{(B_1 \lconj B_2)} \geq 0)&  \RR \; 2\\[3mm]
	4 &  \inte{(B_1 \lconj B_2)} \geq 0& \MP \; 1,3
\end{array}$$\vspace*{-6mm}
\caption{$\dummy\lO (B_1 \lconj B_2)  \der\inte{(B_1 \lconj B_2)} \geq 0$.}\label{fig:der2}
\end{figure}
%

\section{Strong soundness}\label{sec:ssound}

In this section we show that the calculus for $\PLQO$ is strongly sound, starting by proving some auxiliary results namely that
$\Prob_I$ is a probability assignment. 

Let  $I$ be a quantum structure, $A \subseteq B$ a set such that
$\{\udl{B}_j: B_j \in A\}$ is a compatible set of propositional quantum variables and $v$ a valuation on $A$. 
Given an  orthonormal basis 
$U$ of $\cH$ composed of common eigenvectors of the observables in $\{\udl{B}_j: B_j \in A\}$ (observe that we assume that observables are diagonizable and bounded), 
we denote by $$U_v$$ the set
$\{{u}\in U: O_{j}{u}=\lambda_{j}{u}, \lambda_{j} \in D^v_{B_{j}}, B_j \in A\}.$

\begin{prop}\em \label{prop:partition}
Given a quantum structure $I$,  sets $A \subseteq A'  \subseteq B$ such that $\{\udl{B}_j: B_j \in A'\}$ is a compatible set of propositional quantum variables,  an orthonormal basis $U$ of $\cH$ composed of common eigenvectors of the observables in $\{\udl{B}_j: B_j \in A'\}$,  and $v: A \to \{0,1\}$, 
then 
$$\{U_{v'}:v': A' \to \{0,1\} \text{ and } v'|_{A} = v\}$$
is a partition of $U_v$.
\end{prop}
\begin{proof}\ \\
(1)~$U_{v'_1} \cap U_{v'_2}= \emptyset$ for every $v'_1,v'_2: A' \to \{0,1\}$ such that $v'_1|_{A} = v$ and $v'_2|_{A} = v$ and $v'_1 \neq v'_2$. \\[1mm]
Indeed, suppose, by contradiction that $U_{v'_1} \cap U_{v'_2}\neq \emptyset$.
Let ${u'} \in U_{v'_1} \cap U_{v'_2}$. 
Since $v'_1 \neq v'_2$ then there is $B_{j'} \in A'$ such that $v'_1(B_{j'}) \neq v'_2(B_{j'})$.
Hence $D^{v'_1}_{B_{j'}} \neq D^{v'_2}_{B_{j'}}$ and so $D^{v'_1}_{B_{j'}} \cap D^{v'_2}_{B_{j'}}=\emptyset$.
Thus, there is no $\lambda_{j'}$ such that $O_{j'}{u'}=\lambda_{j'}{u'}$ and $\lambda_{j '}\in D^{v'_1}_{B_{j'}}$ and $\lambda_{j'} \in D^{v'_2}_{B_{j'}}$
contradicting the assumption that ${u'} \in U_{v'_1}$ and ${u'} \in U_{v'_2}$.\\[2mm]
(2)~
$U_v \subseteq \bigcup_{v': A' \to \{0,1\}, v'|_{A} = v} U_{v'}$. \\[1mm]
Indeed, 
let ${u} \in U_v$. Then, for every $B_j \in A$ we have $O_j {u} = \lambda_{j}{u}$ and $\lambda_{j} \in D^v_{B_{j}}$.
Let $v': A' \to \{0,1\}$ be the valuation such that
$$v'(B_{j'})= \begin{cases}
1 & \text{ if }  \lambda_{j'} \in \ua_{j'}\\
0 & \text{ otherwise}
\end{cases}
$$
for each $B_{j'} \in A'$ and $\lambda_{j'}$ such that 
 $O_{j'} {u} = \lambda_{j'} {u}$. Then, it is immediate to see that ${u} \in U_{v'}$ by definition of $v'$. It remains to show that
$v'|_{A} = v$. Indeed, let $B_j \in A$. There are two cases.\\
(a) $v'|_{A}(B_j)=1$. Then, $O_{j} {u} = \lambda_{j} {u}$ and $\lambda_{j} \in \ua_{j}$. Hence,
$D^v_{B_{j}}= \ua_{j}$ since ${u} \in U_v$. So, $v(B_j)=1$.\\[1mm]
(b) $v'|_{A}(B_j)=0$. We omit the proof since it is similar to (1).\\[2mm]
(3)~$U_{v'} \subseteq U_v$ for every $v': A' \to \{0,1\}$ such that $v'|_{A} = v$. Let $v': A' \to \{0,1\}$ be an arbitrary valuation such that $v'|_{A} = v$. Let ${u'} \in U_{v'}$.
Then, for every $B_{j'} \in A'$ we have $O_{j'} {u'} = \lambda_{j'}{u'}$ and $\lambda_{j'} \in D^{v'}_{B_{j'}}$.
Let $B_j \in A$. Hence, $O_{j} u' = \lambda_{j}{u'}$ and $\lambda_{j} \in D^{v'}_{B_{j}}$.
Observe that $v'(B_j)=v(B_j)$. So $D^{v'}_{B_{j}}=D^{v}_{B_{j}}$. Therefore,
${u'} \in U_v$.
\end{proof}

Note that, given a quantum structure $I=(\cH,\psi, B_j \mapsto \udl{B}_j)$,  a set $\{B_{j_1},\dots,B_{j_k}\}$ such that $\{\udl{B}_{j_i}: 1\leq i \leq k\}$ is a compatible set,  an orthonormal basis $U$ of $\cH$ composed of common eigenvectors of the observables in $\{\udl{B}_{j_i}: 1\leq i \leq k\}$ and a valuation $v: \{B_{j_1},\dots,B_{j_k}\} \to \{0,1\}$,  
then $$\Proj^{j_k}_{D^v_{{j_k}}}\ldots \Proj^{j_1}_{D^v_{{j_1}}}\psi\quad \text{is} \quad 
\ds \sum_{{u} \in U_v} \langle \psi{,}u \rangle{u}.$$

\begin{prop}\em \label{prop:probIatr}
Given a quantum structure  $I=(\cH,\psi, B_j \mapsto \udl{B}_j)$, $\alpha \in \Lc$ such that $\udl{\alpha}$ is a compatible set,  an orthonormal basis $U$ of $\cH$ composed of common eigenvectors of the observables in $\udl{\alpha}$, then
$$\Prob_I(\alpha)= \sum_{\text{\shortstack[c]
{$v: B_{\alpha} \to \{0,1\}$\\
$v\satc\alpha$
}
}} \left(\sum_{{u} \in U_v} \langle \psi{,}u \rangle^2\right).$$
\end{prop}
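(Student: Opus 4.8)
The plan is to unfold the definition of $\Prob_I(\alpha)$ term by term and to evaluate each term as a squared norm, invoking the vector identity recorded just before the statement. Fix an enumeration $B_\alpha=\{B_{j_1},\dots,B_{j_k}\}$. By definition, $\Prob_I(\alpha)$ is the sum, taken over the valuations $v$ on $B_\alpha$ with $v\satc\alpha$, of the joint probability of obtaining the results $\lrt{D^v_{B_{j_1}},\dots,D^v_{B_{j_k}}}$ when measuring $\udl B_{j_1},\dots,\udl B_{j_k}$ on $\ket\psi$; unwinding the definitions, the term indexed by $v$ is
$$\left\| \Proj_{D^v_{B_{j_k}}}\cdots\Proj_{D^v_{B_{j_1}}}\ket\psi \right\|^2 .$$
So it suffices to show, for each fixed valuation $v$ on $B_\alpha$, that this squared norm equals $\sum_{\ket u\in U_v}\bkt\psi u^2$, and then to sum over the valuations satisfying $\alpha$, since the indexing set is the same on both sides.

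For the per-valuation identity the key ingredient is the vector equality noted immediately before the statement: for the orthonormal basis $U$ and the valuation $v$,
$$\Proj_{D^v_{B_{j_k}}}\cdots\Proj_{D^v_{B_{j_1}}}\ket\psi \;=\; \sum_{\ket u\in U_v}\bkt\psi u\,\ket u .$$
Taking squared norms of the right-hand side and using that the $\ket u$ with $\ket u\in U_v\subseteq U$ are pairwise orthogonal and of unit length, so that the Pythagorean identity applies, yields exactly $\sum_{\ket u\in U_v}\bkt\psi u^2$. This establishes the per-valuation identity, and summing it over the valuations $v$ on $B_\alpha$ with $v\satc\alpha$ produces the claimed formula.

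I do not expect a genuine obstacle here: once the preceding vector identity is available (it in turn follows from the observation made right after the definition of a quantum structure), what remains is bookkeeping together with the orthonormality of $U$. The two points deserving a word of care are the ordering of the projectors — the product $\Proj_{D^v_{B_{j_k}}}\cdots\Proj_{D^v_{B_{j_1}}}$ is read in the same enumeration order of $B_\alpha$ that is used in the definition of $\Prob_I$, so there is no ambiguity — and the degenerate cases: if $B_\alpha=\emptyset$ the only valuation is the empty one, $U_v=U$, and the inner sum is $\|\ket\psi\|^2=1$; if $\alpha$ is unsatisfiable, both sides are empty sums equal to $0$.
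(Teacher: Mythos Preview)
Your proof is correct and follows essentially the same route as the paper: unfold the definition of $\Prob_I(\alpha)$ valuation by valuation, invoke the vector identity displayed just before the proposition, and simplify. The only cosmetic difference is that the paper writes each summand as the inner product $\bra\psi\Proj_{D^v_{B_{j_k}}}\cdots\Proj_{D^v_{B_{j_1}}}\ket\psi$ and then applies $\bra\psi$ to the expanded vector, whereas you pass through the squared norm and Pythagoras; both computations land on $\sum_{\ket u\in U_v}\bkt\psi u^2$.
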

\begin{proof}\ \\  
Assume without loss of generality that
$B_\alpha =\{B_1,\dots,B_k\}$.
Then, 
\begin{align*}  
 \Prob_I(\alpha) = 
		\sum_{\text{\shortstack[c]
{$v: B_{\alpha} \to \{0,1\}$\\
$v\satc\alpha$
}
}} 
		\Prob \left( \lrt{ (\udl B_{1})_{\psi},\dots,(\udl B_{k})_{\psi}} =  
					\lrt{ D^v_{{1}},\dots,D^v_{{k}} } \right)
					\displaybreak[1] \\[1mm]
& \hspace*{-80mm} \displaystyle 
= \sum_{\text{\shortstack[c]
{$v: B_{\alpha} \to \{0,1\}$\\
$v\satc\alpha$
}
}} \langle \psi, \Proj_{D^v_{k}} \ldots \Proj_{D^v_{1}}\psi\rangle 
\displaybreak[1] \\[1mm]
& \hspace*{-80mm} \displaystyle 
= \sum_{\text{\shortstack[c]
{$v: B_{\alpha} \to \{0,1\}$\\
$v\satc\alpha$
}
}} \langle \psi, \sum_{{u} \in U_v} \langle \psi{,}u \rangle {u}\rangle
\displaybreak[1] \\[1mm]
& \hspace*{-80mm} \displaystyle 
= \sum_{\text{\shortstack[c]
{$v: B_{\alpha} \to \{0,1\}$\\
$v\satc\alpha$
}
}} \left(\sum_{{u} \in U_v} \langle \psi{,}u \rangle^2\right).
\displaybreak[1] 
\end{align*}
\end{proof}

\begin{prop}\em \label{prop:entimpliesleqcomp} Given a quantum structure  $I=(\cH,\psi, B_j \mapsto \udl{B}_j)$ and   
$\alpha,\gamma \in \Lc$ such that $B_{\gamma} \subseteq B_{\alpha}$ and $\udl{\alpha}$ is a compatible set, then $\alpha \entc \gamma$ implies $\Prob_I(\alpha)\leq \Prob_I(\gamma)$. Moreover, 
$\gamma \entc \alpha$ implies $\Prob_I(\gamma)\leq \Prob_I(\alpha)$.
\end{prop}
\begin{proof}\ \\ 
Let 
$U$ be an orthonormal basis of $\cH$ composed of common eigenvectors of the observables in $\udl{\alpha}$.\\[1mm]
We start by showing that $\alpha \entc \gamma$ implies $\Prob_I(\alpha)\leq \Prob_I(\gamma)$. 
Assume that $\alpha \entc \gamma$. Then, 
\begin{align*}  
 \Prob_I(\alpha) =  \sum_{\text{\shortstack[c]
{$v: B_{\alpha} \to \{0,1\}$\\
$v\satc\alpha$
}
}} \left(\sum_{{u} \in U_v} \langle \psi{,}u \rangle^2\right)
\displaybreak[1] \\[1mm]
& \hspace*{-55mm} \displaystyle
\leq \sum_{\text{\shortstack[c]
{$v: B_{\alpha} \to \{0,1\}$\\
$v\satc\gamma$
}
}} \left(\sum_{{u} \in U_v} \langle \psi{,}u \rangle^2\right)
\displaybreak[1] \\[1mm]
& \hspace*{-55mm} \displaystyle 
= \sum_{\text{\shortstack[c]
{$v: B_{\gamma} \to \{0,1\}$\\
$v\satc\gamma$
}
}} \left(
\sum_{\text{\shortstack[c]
{$v':B_{\alpha} \to \{0,1\}$\\
$v'|_{B_{\gamma}} = v$
}
}}  \sum_{\ket{u'} \in U_{v'}} \langle \psi{,}u' \rangle^2\right) 
\displaybreak[1] \\[1mm]
& \hspace*{-55mm} \displaystyle 
=  \sum_{\text{\shortstack[c]
{$v:B_{\gamma}\to \{0,1\}$\\
$v\satc\gamma$
}
}} \left(\sum_{{u} \in U_{v}} \langle \psi{,}u \rangle^2\right) \quad (*)
\displaybreak[1] \\[1mm]
& \hspace*{-55mm} \displaystyle 
=  \Prob_I(\gamma)
\end{align*}
where $(*)$ follows from Proposition~\ref{prop:partition}.\\[1mm]
We now show that
$\gamma \entc \alpha$ implies $\Prob_I(\gamma)\leq \Prob_I(\alpha)$.
Assume that $\gamma \entc \alpha$.
Then, 
\begin{align*}  
 \Prob_I(\gamma) =  \sum_{\text{\shortstack[c]
{$v: B_{\gamma} \to \{0,1\}$\\
$v\satc\gamma$
}
}} \left(\sum_{{u} \in U_v} \langle \psi{,}u \rangle^2\right)
\displaybreak[1] \\[1mm]
& \hspace*{-55mm} \displaystyle 
= \sum_{\text{\shortstack[c]
{$v: B_{\gamma} \to \{0,1\}$\\
$v\satc\gamma$
}
}} \left(
\sum_{\text{\shortstack[c]
{$v':B_{\alpha} \to \{0,1\}$\\
$v'|_{B_{\gamma}} = v$
}
}}  \sum_{\ket{u'} \in U_{v'}} \langle \psi{,}u' \rangle^2\right) \quad (*)
\displaybreak[1] \\[1mm]
& \hspace*{-55mm} \displaystyle
= \sum_{\text{\shortstack[c]
{$v: B_{\alpha} \to \{0,1\}$\\
$v\satc\gamma$
}
}} \left(\sum_{{u} \in U_v} \langle \psi{,}u \rangle^2\right)
\displaybreak[1] \\[1mm]
& \hspace*{-55mm} \displaystyle 
\leq  \sum_{\text{\shortstack[c]
{$v:B_{\alpha}\to \{0,1\}$\\
$v\satc\alpha$
}
}} \left(\sum_{{u} \in U_{v}} \langle \psi{,}u \rangle^2\right)
\displaybreak[1] \\[1mm]
& \hspace*{-55mm} \displaystyle 
=  \Prob_I(\alpha)
\end{align*}
where $(*)$ follows from Proposition~\ref{prop:partition}.
\end{proof}

\begin{prop}\em \label{prop:probass} Given a quantum structure  $I=(\cH,\psi, B_j \mapsto \udl{B}_j)$,  
$\Prob_I$ is a probability assignment for every formula $\alpha \in \Lc$ such that $\udl{\alpha}$ is a compatible set.
\end{prop}
\begin{proof}\ \\ 
(1)~$0 \leq \Prob_I(\alpha)\leq 1$, for every formula $\alpha$ such that $\udl{\alpha}$ is a compatible set. Let 
$U$ be an orthonormal basis of $\cH$ composed of common eigenvectors of the observables in $\udl{\alpha}$. Then, 
\begin{align*}  
 \Prob_I(\alpha) =  \sum_{\text{\shortstack[c]
{$v: B_{\alpha} \to \{0,1\}$\\
$v\satc\alpha$
}
}} \left(\sum_{{u} \in U_v} \langle \psi{,}u \rangle^2\right)
\displaybreak[1] \\[1mm]
& \hspace*{-55mm} \displaystyle 
\geq 0
\displaybreak[1]
\end{align*}
and
\begin{align*}  
 \Prob_I(\alpha) =  \sum_{\text{\shortstack[c]
{$v: B_{\alpha} \to \{0,1\}$\\
$v\satc\alpha$
}
}} \left(\sum_{{u} \in U_v} \langle \psi{,}u \rangle^2\right)
\displaybreak[1] \\[1mm]
& \hspace*{-55mm} \displaystyle 
\leq \sum_{v: B_{\alpha} \to \{0,1\}} \left(\sum_{{u} \in U_v} \langle \psi{,}u \rangle^2\right)
\displaybreak[1] \\[1mm]
& \hspace*{-55mm} \displaystyle 
= \sum_{{u} \in U} \langle \psi{,}u \rangle^2 \qquad (*)
\displaybreak[1] \\[1mm]
& \hspace*{-55mm} \displaystyle 
= 1,
\displaybreak[1]
\end{align*}
where $(*)$ holds, by Proposition~\ref{prop:partition} since $\{U_v: v: B_\alpha \to \{0,1\}$ is a partition of $U$.\\[2mm]
(2)~If $\entc \alpha$ then $\Prob_I(\alpha)=1$, for every formula $\alpha$ such that $\udl{\alpha}$ is a compatible set. Let 
$U$ be an orthonormal basis of $\cH$ composed of common eigenvectors of the observables in $\udl{\alpha}$. Assuming that $\alpha$ is a tautology, then
\begin{align*}  
 \Prob_I(\alpha) =  \sum_{\text{\shortstack[c]
{$v: B_{\alpha} \to \{0,1\}$\\
$v\satc\alpha$
}
}} \left(\sum_{{u} \in U_v} \langle \psi{,}u \rangle^2\right)
\displaybreak[1] \\[1mm]
& \hspace*{-55mm} \displaystyle 
= \sum_{v: B_{\alpha} \to \{0,1\}} \left(\sum_{{u} \in U_v} \langle \psi{,}u \rangle^2\right)
\displaybreak[1] \\[1mm]
& \hspace*{-55mm} \displaystyle 
= \sum_{{u} \in U} \langle \psi{,}u \rangle^2  \qquad (*)
\displaybreak[1] \\[1mm]
& \hspace*{-55mm} \displaystyle 
= 1, 
\displaybreak[1]
\end{align*}
where $(*)$ holds, by Proposition~\ref{prop:partition} since $\{U_v: v: B_\alpha \to \{0,1\}$ is a partition of $U$.\\[2mm]
(3)~If $\alpha \entc \beta$ then $\Prob_I(\alpha)\leq \Prob_I(\beta)$, for every formulas $\alpha$ and $\beta$ such that $\udl{\alpha}$ and $\udl{\beta}$ are compatible sets. Assume that $\alpha \entc \beta$. Then, since propositional logic has the Craig interpolation property there is $\gamma$ such $B_\gamma \subseteq B_\alpha \cap B_\beta$ and $\alpha \entc \gamma$ and $\gamma \entc \beta$. Then, by 
Proposition~\ref{prop:entimpliesleqcomp}, $\Prob_I(\alpha) \leq \Prob_I(\gamma)$ and $\Prob_I(\gamma) \leq \Prob_I(\beta)$.\\[2mm]
(4)~If $\entc \lneg (\beta \lconj \alpha)$ then $\Prob_I(\beta \ldisj \alpha)=\Prob_I(\beta)+ \Prob_I(\alpha)$, for every formulas $\alpha$ and $\beta$ such that $\udl{\alpha}\cup \udl{\beta}$ is a compatible set. Assume that $\entc \lneg (\beta \lconj \alpha)$.
Then,
\begin{align*}  
 \Prob_I(\beta \ldisj \alpha) =  \sum_{\text{\shortstack[c]
{$v:B_{\beta \ldisj \alpha} \to \{0,1\}$\\
$v\satc\beta \ldisj \alpha$
}
}} \left(\sum_{{u} \in U_v} \langle \psi{,}u \rangle^2\right)
\displaybreak[1] \\[1mm]
& \hspace*{-55mm} \displaystyle 
= 
\sum_{\text{\shortstack[c]
{$v':B_{\alpha} \cup B_{\beta} \to \{0,1\}$\\
$v' \satc \beta \ldisj \alpha$
}
}}  \left(\sum_{\ket{u'} \in U_{v'}} \langle \psi{,}u' \rangle^2\right)
\displaybreak[1] \\[1mm]
& \hspace*{-55mm} \displaystyle 
= 
\sum_{\text{\shortstack[c]
{$v':B_{\alpha} \cup B_{\beta} \to \{0,1\}$\\
$v' \satc \beta$
}
}}  \left( \sum_{\ket{u'} \in U_{v'}} \langle \psi{,}u' \rangle^2 \right)\; + 
\displaybreak[1] \\[1mm]
& \hspace*{-35mm} \displaystyle 
\sum_{\text{\shortstack[c]
{$v':B_{\alpha} \cup B_{\beta} \to \{0,1\}$\\
$v' \satc \alpha$
}
}}  \left(\sum_{\ket{u'} \in U_{v'}} \langle \psi{,}u' \rangle^2\right) \qquad (*)
\displaybreak[1] \\[1mm]
& \hspace*{-65mm} \displaystyle 
=  \left(\sum_{\text{\shortstack[c]
{$v:B_{\beta}\to \{0,1\}$\\
$v\satc\beta$
}
}} \sum_{{u} \in U_{v}} \langle \psi{,}u \rangle^2 \right)+ \left(\sum_{\text{\shortstack[c]
{$v: B_{\alpha}\to \{0,1\}$\\
$v\satc\alpha$
}
}} \sum_{{u} \in U_{v}} \langle \psi{,}u \rangle^2\right)  (**)
\displaybreak[1] \\[1mm]
& \hspace*{-55mm} \displaystyle 
=  \Prob_I(\beta) + \Prob_I(\alpha)
\end{align*}
where $(*)$ holds by Proposition~\ref{prop:partition} and $(**)$ holds by the hypothesis $\entc \lneg (\beta \lconj \alpha)$.
\end{proof}

\begin{prop}\em \label{prop:marginproj}
Given a quantum structure  $I=(\cH,\psi, B_j \mapsto \udl{B}_j)$, sets $A' \subseteq A \subseteq B$ such that $\{\udl{B}_j: B_j \in A\}$ is a compatible set, then
 $$\Prob_I(\phi_{A'}^{U'})
		= \sum_{\text{\shortstack[c]
{$U \subseteq A$\\
$U \cap A' = U'$
}
}} \Prob_I(\phi^U_{A})
$$
for every $U' \subseteq A'$.
\end{prop}
\begin{proof}\ \\ 
Assume, without loss of generality that,  $A=\{B_1,\dots,B_n\}$, $A'=\{B_1,\dots,B_k\}$
and $U'=\{B_1,\dots,B_\ell\}$ where $k \leq n$ and $\ell \leq k$.
Observe that:
\begin{align*}  
 \Prob_I(\phi_{A'}^{U'}) =  \sum_{\text{\shortstack[c]
{$v:A' \to \{0,1\}$\\
$v\satc \phi_{A'}^{U'}$
}
}}  \langle \psi,\Proj^{1}_{D^v_{{1}}} \dots  \Proj^{k}_{D^v_{{k}}}\psi\rangle
\displaybreak[1] \\[1mm]
& \hspace*{-64mm} \displaystyle 
= \langle \psi,\Proj^{1}_{\ua} \dots  \Proj^{\ell}_{\ua}\Proj^{{\ell +1}}_{\da} \dots  \Proj^{k}_{\da}\psi\rangle
\displaybreak[1]  \\[2mm]
& \hspace*{-64mm} \displaystyle 
= \langle \psi,\Proj^{1}_{\ua} \dots  \Proj^{\ell}_{\ua}\Proj^{{\ell +1}}_{\da} \dots  
      \Proj^{k}_{\da}
      \displaybreak[1]  \\[0mm]
& \hspace*{-28mm} \displaystyle
       (\Proj^{k+1}_{\ua}+\Proj^{k+1}_{\da})\dots (\Proj^{n}_{\ua}+\Proj^{n}_{\da})\psi\rangle
\displaybreak[1]  \\[1mm]
& \hspace*{-64mm} \displaystyle = \sum_{(D_{k+1},\dots. D_n) \in \{\ua,\da\}^{n-k}}
 \langle \psi,\Proj^{1}_{\ua} \dots  \Proj^{\ell}_{\ua}\Proj^{{\ell +1}}_{\da} \dots  
      \Proj^{k}_{\da}
            \displaybreak[1]  \\[0mm]
& \hspace*{5mm} \displaystyle
       \Proj^{k+1}_{D_{k+1}}\dots \Proj^{n}_{D_n}\psi\rangle
\displaybreak[1]  \\[1mm]
& \hspace*{-64mm} \displaystyle = \sum_{(D_{k+1},\dots. D_n) \in \{\ua,\da\}^{n-k}}
 \Prob_I(\phi^{U' \cup \{D_j: D_j=\ua \text{ and } j=k+1,\dots,n\}}_A)
            \displaybreak[1]  \\[2mm]
& \hspace*{-64mm} \displaystyle = \sum_{\text{\shortstack[c]
{$U \subseteq A$\\
$U \cap A' = U'$
}
}} \Prob_I(\phi^U_{A})\end{align*}
\end{proof}

Now we are ready to show that the proposed axiomatization of $\PLQO$ is strongly sound.

\begin{prop}\em
The logic $\PLQO$ is strongly sound.
\end{prop}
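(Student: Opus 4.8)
The plan is to prove strong soundness by showing that every axiom is valid and that each rule preserves the "is entailed by $\Gamma$" property, so that $\Gamma \der \varphi$ implies $\Gamma \ent \varphi$ by induction on the length of the derivation. There are three rules to handle: $\TT$, $\MP$ and $\RR$. The first two are essentially classical, while $\RR$ is where the real work lies, since it is the rule that imports $\RCOF$ reasoning.

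For $\TT$, I would argue that any tautological $\PLQO$ formula $\varphi$ is valid: given a quantum structure $I$ and assignment $\rho$, the map sending each atomic $\PLQO$ subformula $\beta$ of $\varphi$ to its truth value under $I\rho$ is a classical valuation of the propositional skeleton of $\varphi$, and since $\varphi$ is a propositional tautology over that skeleton, $I\rho \sat \varphi$. For $\MP$, if $I\rho \sat \gamma$ for all $\gamma \in \Gamma$, then $I\rho \sat \varphi_1$ and $I\rho \sat \varphi_1 \limp \varphi_2$, whence $I\rho \sat \varphi_2$ directly from the satisfaction clause for implication. Hence if $\Gamma \ent \varphi_1$ and $\Gamma \ent \varphi_1 \limp \varphi_2$ then $\Gamma \ent \varphi_2$.

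The heart of the proof is the soundness of $\RR$. Fix a quantum structure $I$, an assignment $\rho$, and suppose the $\RCOF$ side condition holds, i.e.\ the universal closure of $\bigl(Q^{B_\varphi,\{\alpha:\int\!\alpha\mycirc p\in\varphi_\int\}} \lconj \bigwedge_{j=1}^k \beta_j^\RCOF\bigr) \limp \beta^\RCOF$ is a theorem of $\RCOF$, where $\varphi$ is $(\beta_1 \lconj\cdots\lconj\beta_k)\limp\beta$. Since $\reals$ is a model of $\RCOF$, this closure holds in $\reals$. The key step is to build, from $I$, $\rho$ and a chosen orthonormal basis $U$ of $\cH$, an assignment $\rho^\star: X \to \reals$ extending $\rho$ on $X_\nats$ such that: $\rho^\star(x_\alpha) = \Prob_I(\alpha)$ for the relevant $\alpha$ (using Proposition~\ref{prop:probass} and Proposition~\ref{prop:probIatr}); $\rho^\star(x_{\phi^U_A}) = \sum_{\ket u \in U_v}\langle\psi\mid u\rangle^2$ where $v$ is the valuation on $A$ picked out by $\phi^U_A$; and $\rho^\star(x_{A'}) = 0$ if the pqvs indexed by $A'$ are compatible and $\rho^\star(x_{A'}) = 1$ otherwise. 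One then checks that $\reals\rho^\star \satfo Q^{B_\varphi,\{\alpha:\dots\}}$: the range/normalization conjuncts and the marginal conjuncts follow from Proposition~\ref{prop:partition} (the $U_v$ form a partition, and restriction of valuations corresponds to merging blocks), the conjuncts relating $x_{\alpha_i}$ to the $x_{\phi^U_A}$'s follow from the definition of $\Prob_I$, and the $x_{A'}\geq 0$ conjuncts are immediate. Similarly, if $I\rho\sat\beta_j$ for each $j$, one verifies $\reals\rho^\star\satfo\beta_j^\RCOF$: for $\beta_j = \lO\alpha$ this is exactly $I$-observability of $\alpha$, which by the analysis in Section~\ref{sec:preliminaries} is equivalent to $\udl\alpha$ being compatible, i.e.\ all the relevant $x_{A'}$ being $0$; for $\beta_j = \int\!\alpha\mycirc p$ one additionally needs $\Prob_I(\alpha)\mycirc p^{\reals\rho}$, which transfers since $\rho^\star(x_\alpha)=\Prob_I(\alpha)$ and $p\in T^\circ_\RCOF$ so $p^{\reals\rho^\star}=p^{\reals\rho}$. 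The $\RCOF$ validity then forces $\reals\rho^\star\satfo\beta^\RCOF$, and reading this back through the same dictionary gives $I\rho\sat\beta$, hence $I\rho\sat(\beta_1\lconj\cdots\lconj\beta_k)\limp\beta$; thus the conclusion of $\RR$ is valid, so entailed by any $\Gamma$.

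The main obstacle, and the step demanding the most care, is verifying that the constructed $\rho^\star$ genuinely satisfies every conjunct of $Q^{B_\varphi,\Delta}$ and that $I$-observability corresponds precisely to the vanishing of the compatibility variables $x_{A'}$ — in particular handling the subtlety that observability is defined via the \emph{essential} propositional symbols $\eB_\alpha$ rather than all of $B_\alpha$, so one must make sure $B_\varphi$ and the index sets in $\beta_j^\RCOF$ (which use $\eB_\alpha$) are consistent, and that assigning the same value to a variable $x_{\phi^U_A}$ coming from different atomic formulas is well defined. Once the dictionary between quantum-structure data and $\RCOF$ assignments is set up carefully, the argument is a routine transfer using decidability/soundness of $\RCOF$ over $\reals$ together with Propositions~\ref{prop:partition}, \ref{prop:probIatr} and \ref{prop:probass}.
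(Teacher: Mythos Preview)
Your proposal is correct and follows essentially the same approach as the paper: build the assignment $\rho^\star$ (the paper's $\rho'$) from $I$ and $\rho$ by setting $\rho^\star(x_\alpha)=\Prob_I(\alpha)$ and $\rho^\star(x_{A'})\in\{0,1\}$ according to compatibility, verify it satisfies $Q^{B_\varphi,\Delta}$ via Propositions~\ref{prop:partition}--\ref{prop:probass}, transfer each $\beta_j$ to $\beta_j^\RCOF$, invoke the $\RCOF$ theorem in $\reals$, and read $\beta^\RCOF$ back to $I\rho\sat\beta$. The only place where the paper is more explicit is that it treats all four literal shapes---$\lO\alpha$, $\inte\alpha\mycirc p$, and their negations $(\lO\alpha)\limp(\inte\lverum<1)$ and $(\inte\alpha\mycirc p)\limp(\inte\lverum<1)$---as separate cases in both directions, whereas your sketch names only the two positive cases; the negated cases go through by the same correspondence you set up, so this is a matter of detail rather than a gap.
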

\begin{proof}\ \\
 We only check that axiom $\RR$ is sound since the proof of the others is straightforward. \\[2mm]
Rule ($\RR$). Let $I$ be a quantum structure and $\rho: X \to \reals$ be an assignment. 
Assume that
$$I\rho \sat \beta_1 \lconj \dots \lconj \beta_k$$
and that $$\ds\bigforall
		\left(\left(
		 \bigwedge_{j=1}^k \beta_j^{\RCOF}\right)
					\limp \beta^{\RCOF}\right) \quad (*)$$ is in $\RCOF$. 
Let $\rho': X \to \reals$ be an assignment such that
 $\rho'(x)=\rho(x)$ for $x \in X_\nats$, $$\rho'(x_\alpha)=\Prob_I(\alpha)$$
 for every $\alpha \in \Lc$ such that $\udl{\alpha}$ is a compatible set of observables in $I$,  
 and
 $$\rho'(x_{\{B_i,B_j\}})=\begin{cases}
 0 & \text{if } \{\udl{B_i},\udl{B_j}\} \text{ is a compatible set of observables in } I\\[1mm]
1 & \text{ otherwise}
\end{cases}
$$
for every $B_i,B_j \in B_{(\beta_1 \lconj \dots \lconj \beta_k) \limp \beta}$.
We start by showing that
$$\reals \rho' \satfo \beta_j^\RCOF,$$
for $j=1,\dots,k$.
We consider three cases:\\[1mm]
(1)~$\beta_j$ is $\lO \alpha_j$. Then, $\beta_j^\RCOF$ is $$Q^{\lO \alpha_j}\lconj {Q^{{\inte{}}{\alpha_j}}}.$$
Recall that
$I \rho \sat \lO \alpha_j$. Hence, $\alpha_j$ is $I$-observable, that is
$\udl{\alpha_j}$ is a compatible set of observables in $I$. Thus,
$$\rho'(x_{\{B_{i_1},B_{i_2}\}})=0$$
for every distinct $B_{i_1},B_{i_2} \in B_{\alpha_j}$.
Therefore,
$$\reals \rho' \satfo Q^{{\lO \alpha_j}}.$$
Moreover, ${\reals \rho' \sat Q^{{\inte{}}{\alpha_j}}}$. In fact,  \\[1mm]
(a)~$0 \leq \rho'(x_{\phi_{B_{\alpha_j}}^U}) \leq 1.$ 
Let $U \subseteq B_{\alpha_j}$.  Since
$$\rho'(x_{\phi_{B_{{\alpha_j}}}^U})=\Prob_I(\phi_{B_{{\alpha_j}}}^U)$$
then $0 \leq \rho'(x_{\phi_{B_{{\alpha_j}}}^U}) \leq 1$ by Proposition~\ref{prop:probass}.\\[2mm]
(b)~$\sum_{U \subseteq B_{{\alpha_j}}} \rho'(x_{\phi_{B_{{\alpha_j}}}^U})=1.$
Observe that
$$\bigvee_{U \subseteq B_{{\alpha_j}}} \phi_{B_{{\alpha_j}}}^U$$
is a tautology and so
$$\Prob_I(\bigvee_{U \subseteq B_{{\alpha_j}}} \phi_{B_{{\alpha_j}}}^U)=1$$
by Proposition~\ref{prop:probass}. 
Since, for distinct $U_1,U_2 \subseteq B_{\alpha_j}$, 
$$\entc \lneg(\phi_{B_{{\alpha_j}}}^{U_1} \lconj \phi_{B_{{\alpha_j}}}^{U_2}),$$
then, by Proposition~\ref{prop:probass}, 
$$\sum_{U \subseteq B_{{\alpha_j}}}\Prob_I(\phi_{B_{{\alpha_j}}}^U)=1.$$
Thus, $$\sum_{U \subseteq B_{{\alpha_j}}} \rho'(x_{\phi_{B_{{\alpha_j}}}^U})=1.$$
(c)~$\ds \rho'(x_{\phi_{A'}^{U'} }) = \sum_{\text{\shortstack[c]
{$U \subseteq B_{\alpha_j}$\\
$U \cap A' = U'$
}
}} \rho'(x_{\phi_{B_{{\alpha_j}}}^{U}})$ for every $A' \subseteq B_{\alpha_j}$ and $U' \subseteq A'$.
Observe that
\begin{align*}\rho'(x_{\phi_{A'}^{U'} })= \Prob_I(\phi_{A'}^{U'})
\displaybreak[1] \\[1mm]
& \hspace*{-22mm} \displaystyle 
		= \sum_{\text{\shortstack[c]
{$U \subseteq B_{{\alpha_j}}$\\
$U \cap A' = U'$
}
}} \Prob_I(\phi^U_{B_{\alpha_j}})  \qquad (*)
						\displaybreak[1] \\[1mm]
& \hspace*{-22mm} \displaystyle 
		= \sum_{\text{\shortstack[c]
{$U \subseteq B_{\alpha_j}$\\
$U \cap A' = U'$
}
}} \rho'(x_{\phi_{B_{{\alpha_j}}}^{U}})
						\displaybreak[1] 
\end{align*}
where $(*)$ holds by Proposition~\ref{prop:marginproj}.\\[2mm]
(d)~$\rho'(x_{{\alpha_j}}) = \sum_{\text{\shortstack[c]
{$v:B_{{\alpha_j}} \to \{0,1\}$\\
$v\satc{\alpha_j}$
}
}} \rho'(x_{\phi_{B_{\alpha_j}}^{\{B_j \in B_{\alpha_j}: v(B_j)=1\}}})$. Observe that
$$\entc {\alpha_j} \leqv \left(\bigvee_{\text{\shortstack[c]
{$v:B_{{\alpha_j}} \to \{0,1\}$\\
$v\satc{\alpha_j}$
}
}} \phi_{B_{\alpha_j}}^{\{B_j \in B_{\alpha_j}: v(B_j)=1\}}\right)$$
and that 
$$\entc \lneg(\phi_{B_{\alpha_j}}^{\{B_j \in B_{\alpha_j}: v_1(B_j)=1\}} \lconj \phi_{B_{\alpha_j}}^{\{B_j \in B_{\alpha_j}: v_2(B_j)=1\}})$$
for every distinct pair $v_1,v_2: B_{\alpha_j} \to \{0,1\}$.
Thererefore, by Proposition~\ref{prop:probass}, the thesis follows.
\\[2mm]
(2)~$\beta_j$ is $\inte{\alpha_j} \mycirc p_j$.  Then, $\beta_j^\RCOF$ is $$Q^{\lO \alpha_j} \lconj {Q^{{\inte{}}{\alpha_j}}} \lconj (x_{\alpha_j} \mycirc p).$$
Recall that $I \rho \sat \inte{\alpha_j} \mycirc p_j$. Then
$I \rho \sat \lO \alpha_j$. Thus, by (1), 
$$\reals \rho' \satfo Q^{\lO \alpha_j} \lconj {Q^{{\inte{}}{\alpha_j}}}.$$
So it is enough to prove that
$$\reals \rho' \satfo x_{{\alpha_j}} \mycirc p_j.$$
Since $I \rho \sat \inte{\alpha_j} \mycirc p_j$, then
$$\Prob_I(\alpha_j)\mycirc p_j^\reals.$$
Thus, $$\rho'(x_{\alpha_j})\mycirc p_j^\reals$$
and so, $\reals \rho' \satfo x_{{\alpha_j}} \mycirc p_j$.\\[1mm]
(3)~$\beta_j$ is $\lneg \lO \alpha_j$. 
Then, $\beta_j^\RCOF$ is $$Q^{\lneg \lO \alpha_j}.$$
Recall that $I \rho \sat \lneg \lO \alpha_j$. Hence,
$\alpha_j$ is not an $I$-observable. Then, there are $B_i,B_j \in B_{\alpha_j}$ such that
$\{\udl{B}_i,\udl{B}_j\}$ is an incompatible set of observables in $I$. So, 
$$\rho'(x_{\{B_i,B_j\}})=1.$$
Therefore,
$$\reals \rho' \satfo \lneg Q^{\lO \alpha_j}.$$
Moreover, by definition of $\rho'$,
$$\reals \rho' \satfo \displaystyle  
\left(\bigwedge_{\text{\shortstack[c]
{$A' \subseteq {B_{\alpha_j}}$\\
$|A'|=2$
}
}} x_{A'} \geq 0\right).$$
In this way, we conclude the proof of
$$\reals \rho' \satfo \ds
		\bigwedge_{j=1}^k \beta_j^\RCOF.$$
Returning to the main proof, observe that, by (*), 
$$\reals \rho' \satfo \ds 
		\left(
		\bigwedge_{j=1}^k \beta_j^\RCOF\right)
					\limp \beta^\RCOF,$$
so
$$\reals \rho' \satfo \beta^\RCOF.$$ 
It remains to prove that $I \rho \sat \beta$. There are three cases:\\[1mm]
(1)~$\beta$ is $\lO \alpha$. Hence, $$\reals \rho' \satfo Q^{\lO \alpha}\lconj {Q^{{\inte{}}{\alpha}}}.$$ Therefore,
$\rho'(x_{\{B_{i_1},B_{i_2}\}}) = 0$ for every distinct $B_{i_1},B_{i_2} \in B_{\alpha}$. 
Thus, by definition of $\rho'$, $\{\udl{B_{i_1}},\udl{B_{i_2}}\}$ is a compatible set of quantum variables.
Therefore, $\udl \alpha$ is a compatible set of quantum variables, that is $\alpha$ is an $I$-observable formula
and so $I \rho \sat \lO \alpha$.\\[1mm]
(2)~$\beta$ is $\inte{\alpha} \mycirc p$. Thus, 
$$\reals \rho' \satfo {Q^{\lO \alpha}}\lconj {Q^{{\inte{}}{\alpha}}}  \lconj (x_{{\alpha}} \mycirc p).$$ Therefore, similarly to  (1),
$I \rho \sat \lO \alpha$. Moreover, $\rho'(x_\alpha) \mycirc p^{\reals \rho}$. Hence,
by definition of $\rho'$, $$\Prob_I(\alpha) \mycirc p^{\reals \rho}$$
and so $I \rho \sat \inte{\alpha}\mycirc p$.\\[1mm]
(3)~$\beta$ is $\lneg \lO \alpha$. Thus,
$$\reals \rho' \satfo Q^{\lneg \lO \alpha}.$$
Hence,
$\rho'(x_{\{B_{i_1},B_{i_2}\}}) =1$ for some distinct $B_{i_1},B_{i_2} \in B_{\alpha}$.
Thus, by definition of $\rho'$, $\{\udl{B}_{i_1},\udl{B}_{i_2}\}$ is not a compatible set of quantum variables.
Therefore, $\udl \alpha$ is not a compatible set of quantum variables, that is $\alpha$ is not an $I$-observable formula
and so $I \rho \not \sat \lO \alpha$. Thus, $I \rho \sat \lneg \lO \alpha$.
\end{proof}

\section{Weak completeness} \label{sec:wcomp}

The objective of this section is to show that $\PLQO$ is weakly complete. 
That is, for every formula $\varphi \in L_\PLQO$,
if $\ent \varphi$ then $\der \varphi$. There is no hope that there is a finitary calculus for which $\PLQO$ is strongly complete
since, as we referred above, $\PLQO$ is not compact.

We prove the weak completeness by contrapositive, that is, if $\not \der \varphi$ then $\not \ent \varphi$. Hence, assuming that
$\varphi$ is not derivable, we show that there is a quantum structure that falsifies $\varphi$. With this purpose in mind, we start by defining a generic quantum structure that will be relevant not only for weak completeness but also for conservativeness.

Given a finite set $B' \subseteq B$, $\NC \subseteq B' \times B'$ and a map $f:\{0,\dots, 2^{|B'|}-1\} \to \comps$
such that $\NC$ is irreflexive and symmetric, and 
$$\sqrt{\sum_{j=0}^{2^{|B'|}-1} f(j)\ovl{f(j)}}=1,$$
let
$$I^{B',\NC,f}=(\cH,\psi,B_j \mapsto \udl B_j)$$
be such that
\begin{itemize}
\item $\cH$ is $\comps^{2^{|B'|}+2 |\NC|}$ and fix 
$$\{\zeta_{v_0},\dots,\zeta_{v_{2^{|B'|}-1}}\} \cup \bigcup_{A \in \NC} \{\zeta_{A^\bullet},\zeta_{A_\bullet}\}$$
as a basis for $\cH$ where $v_k: B' \to \{0,1\}$ is the valuation $B_j \mapsto b_j$ where $b_j$ is the $j$-th bit in the binary 
decomposition of $k$ (assuming left padding by zeros);

\item $\psi$ is $$\ds \sum_{j=0}^{2^{|B'|}-1} f(j)\, \zeta_{v_j};$$

\item For each $B_j \in B'$, $\udl{B}_j=(O_j,\{\{0,1\},\comp{\{0,1\}}\},\{0,1\})$ with
\begin{align*}O_j=\left(\sum_{\{v_k: v_k(B_j)=1\}} \zeta_{v_k}\langle \, \zeta_{v_k}\, , \cdot\, \rangle \right) + \left(\sum_{\{v_k: v_k(B_j)=0\}} -\zeta_{v_k}\langle \, \zeta_{v_k}\, , \cdot\, \rangle\right) +
\displaybreak[1] \\[1mm]
& \hspace*{-80mm} \displaystyle  
\left(\sum_{\{(B_i,B_j) \in \NC: j <i\}} \zeta_{{(B_i,B_j)}^\bullet_\bullet} \, \langle \, \zeta_{{(B_i,B_j)}^\bullet_\bullet}\, ,\cdot \, \rangle \right)+
\displaybreak[1] \\[1mm]
& \hspace*{-80mm} \displaystyle  
\left(\sum_{\{(B_i,B_j) \in \NC: j >i\}}  \zeta_{{(B_i,B_j)}_\bullet} \, \langle \, \zeta_{{(B_i,B_j)}_\bullet}\, ,\cdot \, \rangle\right)+
\displaybreak[1] \\[1mm]
& \hspace*{-80mm} \displaystyle  
\left(\sum_{\{A \in \NC: B_ j\notin A\}}  \zeta_{A_\bullet} \, \langle \, \zeta_{A_\bullet}\, ,\cdot \, \rangle +  \zeta_{A^\bullet} \, \langle \, \zeta_{A^\bullet}\, ,\cdot \, \rangle\right) 
\end{align*}  
where
$$\zeta_{{(B_i,B_j)}^\bullet_\bullet}=\sqrt{\frac 12} \, \zeta_{{(B_i,B_j)}^\bullet} + \sqrt{\frac 12} \, \zeta_{{(B_i,B_j)}_\bullet};$$
\item For $B_j \notin B'$, choose for $O_j$ an arbitrary observable on $\cH$, for $\cD_j$ any partition of $\sigma_p(O_j)$ with two elements and for $\ua_j$ one of them.
\end{itemize}

Observe that the chosen basis for the Hilbert space $\cH$ of $I^{B',\NC,f}$ is a finite set including vectors representing the valuations over the finite set $B'$ of propositional symbols and vectors assigned to the non compatible  pairs in $\NC$. The map $f$ corresponds to the ``mass" of each valuation. So, the quantum state $\ket \psi$ is a superposition of the valuation vectors each weighted by its mass. The operator $O_j$ associated with a propositional symbol $B_j \in B'$ is the sum of several operators:
\begin{itemize}

\item Projection $\sum_{\{v_k: v_k(B_j)=1\}} \zeta_{v_k}\langle \, \zeta_{v_k}\, , \cdot\, \rangle$ stating that the vectors associated with the valuations that satisfy $B_j$ are eigenvectors with eigenvalue $1$.

\item Operator $\sum_{\{v_k: v_k(B_j)=0\}}- \zeta_{v_k}\langle \, \zeta_{v_k}\, , \cdot\, \rangle$ stating that the vectors associated with the valuations that do not  satisfy $B_j$ are eigenvectors with eigenvalue $-1$.

\item Projections $\sum_{\{(B_i,B_j) \in \NC: j <i\}} \zeta_{{(B_i,B_j)}^\bullet_\bullet} \, \langle \, \zeta_{{(B_i,B_j)}^\bullet_\bullet}\, ,\cdot \, \rangle$, 
$\sum_{\{(B_i,B_j) \in \NC: j >i\}} \zeta_{{(B_i,B_j)}_\bullet} \, \langle \, \zeta_{{(B_i,B_j)}_\bullet}\, ,\cdot \, \rangle$ referring  to the incompatible pairs where $B_j$ appears which were defined so  that $O_i$ and $O_j$  are not compatible (observe that 
 that the two vectors $\zeta_{{(B_i,B_j)}^\bullet}$ and $\zeta_{{(B_i,B_j)}_\bullet}$ associated with the pair $(B_i,B_j)$ are eigenvectors with eigenvalues $0$ and $1$, respectively). 

\item Projection $\sum_{\{A \in \NC: B_ j\notin A\}}  \zeta_{A_\bullet}\langle \, \zeta_{A_\bullet}\, ,\cdot \, \rangle +  \zeta_{A^\bullet} \, \langle \, \zeta_{A^\bullet}\, ,\cdot \, \rangle$  stating that the vectors referring to the incompatible pairs where $B_j$ does not appear are eigenvectors with eigenvalue $1$. 

\end{itemize}
So, $O_j$ is an observable since it is a diagonizable, bounded and self-adjoint operator. 

The next result states that the pairs of propositional symbols in $\NC$ correspond to incompatible quantum variables in $I^{B',\NC,f}$ and vice-versa. 

\begin{prop}\label{prop:gstructnonc}\em
Let $B_\ell,B_m \in B'$ be distinct variables. 
Then, $\{\udl{B}_\ell,\udl{B}_m\}$ is  not a compatible set of quantum variables in $I^{B',\NC,f}$ iff 
$(B_\ell,B_m)\in \NC$.
\end{prop}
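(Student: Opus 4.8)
The plan is to go back to the definition: $\{\udl{B}_\ell,\udl{B}_m\}$ is a compatible set of quantum variables in $I^{B',\NC,f}$ exactly when the induced observables $\wt O_\ell$ and $\wt O_m$ commute, i.e.\ when the projectors onto the true subspaces $\cH_{\udl{B}_\ell}$ and $\cH_{\udl{B}_m}$ commute. So the statement to prove becomes: $[\wt O_\ell,\wt O_m]=0$ iff $(B_\ell,B_m)\notin\NC$. The computation is organised around the orthogonal decomposition $\cH=V\oplus\bigoplus_{A\in\NC}W_A$, where $V=\text{span}\{\ket{v_0},\dots,\ket{v_{2^{|B'|}-1}}\}$ and each $W_A=\text{span}\{\ket{\ovl{A}},\ket{\udl{A}}\}$ is a plane. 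First I would note that each $O_j$ is a sum of operators each of which maps $V$ into itself and each $W_A$ into itself; hence $O_j$, and therefore its spectral projector $\wt O_j$, preserves this decomposition. Consequently $[\wt O_\ell,\wt O_m]=0$ iff the restrictions of $\wt O_\ell$ and $\wt O_m$ to each summand commute, and I would examine the summands in turn.

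On $V$ the restriction $\wt O_j|_V$ is the projection onto $\text{span}\{\ket{v_k}:v_k(B_j)=1\}$, i.e.\ onto the span of a subset of the fixed orthonormal basis $\{\ket{v_k}\}$, and any two such coordinate projections commute. On a plane $W_A$ with $A\in\NC$: if $A$ has neither $B_\ell$ nor $B_m$ as a component then, by the last summand of $O_j$, both $\wt O_\ell|_{W_A}$ and $\wt O_m|_{W_A}$ are the identity on $W_A$; if $A$ has exactly one of them as a component, then the restriction attached to the symbol that is not a component of $A$ is the identity on $W_A$; in either case the two restrictions commute. Since $(B_\ell,B_m)\notin\NC$ means that no $A\in\NC$ has both $B_\ell$ and $B_m$ as components, this already yields one direction: $(B_\ell,B_m)\notin\NC$ implies that $\{\udl{B}_\ell,\udl{B}_m\}$ is compatible.

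For the converse I would assume $(B_\ell,B_m)\in\NC$, take the pair $A\in\NC$ whose components are $B_\ell$ and $B_m$, and exhibit non-commutation already on the single plane $W=W_A$. Here the index-order split in the definition of $O_j$ is what matters: the symbol of smaller index among $B_\ell,B_m$ contributes to its observable the rank-one projection of $W$ onto $\text{span}\,\ket{\ovl{\udl{A}}}$, where $\ket{\ovl{\udl{A}}}=\sqrt{\frac12}\,\ket{\ovl{A}}+\sqrt{\frac12}\,\ket{\udl{A}}$, whereas the symbol of larger index contributes the rank-one projection of $W$ onto $\text{span}\,\ket{\udl{A}}$. These are projections onto two distinct, non-orthogonal lines of the plane $W$, hence they do not commute — expanding the commutator in the basis $\{\ket{\ovl{A}},\ket{\udl{A}}\}$ gives a nonzero operator. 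Therefore $\wt O_\ell$ and $\wt O_m$ fail to commute, so $\{\udl{B}_\ell,\udl{B}_m\}$ is not compatible, which completes the equivalence.

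The only genuine work is the bookkeeping inside the definition of $O_j$: identifying, for a symbol $B_j$ that is a component of some $A\in\NC$, the precise one-dimensional subspace $\cH_{\udl{B}_j}\cap W_A$ (in particular checking that the distinguished spectral values of $\udl{B}_j$ pick out the eigenvalue-$1$ line of $O_j|_{W_A}$ rather than all of $W_A$), keeping straight which of the two symbols of the pair carries the superposition vector $\ket{\ovl{\udl{A}}}$ and which carries $\ket{\udl{A}}$, and handling the orientation of the pairs in $\NC$ consistently. Nothing beyond finite-dimensional linear algebra is needed.
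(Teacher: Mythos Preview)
Your proposal is correct and follows essentially the same approach as the paper: both establish the equivalence by directly checking whether the induced projectors $\wt O_\ell$ and $\wt O_m$ commute. The paper writes out $\wt O_\ell$ and $\wt O_m$ in full, computes both products $\wt O_m\wt O_\ell$ and $\wt O_\ell\wt O_m$ term by term, and subtracts; you organise the same computation through the block decomposition $\cH=V\oplus\bigoplus_{A\in\NC}W_A$, which immediately isolates the single plane $W_{(B_\ell,B_m)}$ on which non-commutation can occur --- a cleaner packaging of the identical calculation.
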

\begin{proof} \ \\
($\from$) 
Assume that $(B_\ell,B_m)\in \NC$ and that  $\ell <m$. Observe that 
\begin{align*}O_\ell=\left(\sum_{\{v_k: v_k(B_\ell)=1\}} \zeta_{v_k}\langle \, \zeta_{v_k}\, , \cdot\, \rangle \right) + \left(\sum_{\{v_k: v_k(B_\ell)=0\}} -\zeta_{v_k}\langle \, \zeta_{v_k}\, , \cdot\, \rangle\right) +\displaybreak[1] \\[1mm]
& \hspace*{-80mm} \displaystyle  
\left(\sum_{\{(B_i,B_\ell) \in \NC: \ell <i, i \neq m\}} 
\zeta_{{(B_i,B_\ell)}^\bullet_\bullet} \, \langle \, \zeta_{{(B_i,B_\ell)}^\bullet_\bullet}\, ,\cdot \, \rangle\right)+
\displaybreak[1] \\[3mm]
& \hspace*{-60mm} \displaystyle \zeta_{{(B_m,B_\ell)}^\bullet_\bullet} \, \langle \, \zeta_{{(B_m,B_\ell)}^\bullet_\bullet}\, ,\cdot \, \rangle\; +
\displaybreak[1] \\[3mm] 
& \hspace*{-80mm} \displaystyle  
\left(\sum_{\{(B_i,B_\ell) \in \NC:\ell >i\}} \zeta_{{(B_i,B_\ell)}_\bullet} \, \langle \, \zeta_{{(B_i,B_\ell)}_\bullet}\, ,\cdot \, \rangle\right)+
\displaybreak[1] \\[1mm]
& \hspace*{-80mm} \displaystyle  
\left(\sum_{\{A \in \NC: B_\ell\notin A\}} \zeta_{A_\bullet} \, \langle \, \zeta_{A_\bullet}\, ,\cdot \, \rangle +  \zeta_{A^\bullet} \, \langle \, \zeta_{A^\bullet}\, ,\cdot \, \rangle\right)
\end{align*}
and
\begin{align*} O_m=\left(\sum_{\{v_k: v_k(B_m)=1\}} \zeta_{v_k}\langle \, \zeta_{v_k}\, , \cdot\, \rangle \right) + \left(\sum_{\{v_k: v_k(B_m)=0\}} -\zeta_{v_k}\langle \, \zeta_{v_k}\, , \cdot\, \rangle\right)  +
\displaybreak[1] \\[1mm]
& \hspace*{-80mm} \displaystyle  
\left(\sum_{\{(B_i,B_m) \in \NC: m <i\}} \zeta_{{(B_i,B_m)}^\bullet_\bullet} \, \langle \, \zeta_{{(B_i,B_m)}^\bullet_\bullet}\, ,\cdot \, \rangle\right)+
\displaybreak[1] \\[3mm]
& \hspace*{-60mm} \displaystyle \zeta_{{(B_m,B_\ell)}_\bullet} \, \langle \, \zeta_{{(B_m,B_\ell)}_\bullet}\, ,\cdot \, \rangle \; +
\displaybreak[1] \\[3mm] 
& \hspace*{-80mm} \displaystyle  
\left(\sum_{\{(B_i,B_m) \in \NC:m >i, i \neq \ell\}} \zeta_{{(B_i,B_m)}_\bullet} \, \langle \, \zeta_{{(B_i,B_m)}_\bullet}\, ,\cdot \, \rangle\right)+
\displaybreak[1] \\[1mm]
& \hspace*{-80mm} \displaystyle  
\left(\sum_{\{A \in \NC: B_m\notin A\}} \zeta_{A_\bullet} \, \langle \, \zeta_{A_\bullet}\, ,\cdot \, \rangle +  \zeta_{A^\bullet} \, \langle \, \zeta_{A^\bullet}\, ,\cdot \, \rangle\right).
\end{align*}
We now show that
$$O_m  O_\ell \neq O_\ell  O_m.$$
Indeed, 
\begin{align*}O_m O_\ell=\left(\sum_{\{v_k: v_k(B_\ell)+v_k(B_m)\neq 1\}} \zeta_{v_k}\langle \, \zeta_{v_k}\, , \cdot\, \rangle\right) +
\displaybreak[1] \\[1mm]
& \hspace*{-62mm} \displaystyle  
\left(\sum_{\{v_k: v_k(B_\ell)+v_k(B_m)=1\}} -\zeta_{v_k}\langle \, \zeta_{v_k}\, , \cdot\, \rangle\right) +
\displaybreak[1] \\[1mm]
& \hspace*{-62mm} \displaystyle  
\left(\sum_{\{(B_i,B_\ell) \in \NC: \ell <i, i \neq m\}} 
\zeta_{{(B_i,B_\ell)}^\bullet_\bullet} \, \langle \, \zeta_{{(B_i,B_\ell)}^\bullet_\bullet}\, ,\cdot \, \rangle\right)+
\displaybreak[1] \\[3mm]
& \hspace*{-50mm} \displaystyle
\sqrt{\frac 12} \; \zeta_{{(B_m,B_\ell)}_\bullet} \, \langle \, \zeta_{{(B_m,B_\ell)}^\bullet_\bullet}\, ,\cdot \, \rangle \; +
\displaybreak[1] \\[3mm] 
& \hspace*{-62mm} \displaystyle  
\left(\sum_{\{(B_i,B_\ell) \in \NC:\ell >i\}} \zeta_{{(B_i,B_\ell)}_\bullet} \, \langle \, \zeta_{{(B_i,B_\ell)}_\bullet}\, ,\cdot \, \rangle\right)+
\displaybreak[1] \\[1mm]
& \hspace*{-62mm} \displaystyle  
\left(\sum_{\{(B_i,B_m) \in \NC: m <i\}} \zeta_{{(B_i,B_m)}^\bullet_\bullet} \, \langle \, \zeta_{{(B_i,B_m)}^\bullet_\bullet}\, ,\cdot \, \rangle\right)+
\displaybreak[1] \\[3mm]
& \hspace*{-62mm} \displaystyle  
\left(\sum_{\{(B_i,B_m) \in \NC:m >i, i \neq \ell\}} \zeta_{{(B_i,B_m)}_\bullet} \, \langle \, \zeta_{{(B_i,B_m)}_\bullet}\, ,\cdot \, \rangle\right)+
\displaybreak[1] \\[1mm]
& \hspace*{-62mm} \displaystyle  
\left(\sum_{\{A \in \NC: B_\ell,B_m\notin A\}} \zeta_{A_\bullet} \, \langle \, \zeta_{A_\bullet}\, ,\cdot \, \rangle +  \zeta_{A^\bullet} \, \langle \, \zeta_{A^\bullet}\, ,\cdot \, \rangle\right).
\end{align*}
On the other hand,
\begin{align*} O_\ell O_m=\left(\sum_{\{v_k: v_k(B_\ell)+v_k(B_m)\neq 1\}} \zeta_{v_k}\langle \, \zeta_{v_k}\, , \cdot\, \rangle\right) +
\displaybreak[1] \\[1mm]
& \hspace*{-62mm} \displaystyle  
\left(\sum_{\{v_k: v_k(B_\ell)+v_k(B_m)=1\}} -\zeta_{v_k}\langle \, \zeta_{v_k}\, , \cdot\, \rangle\right) +
\displaybreak[1] \\[1mm]
& \hspace*{-62mm} \displaystyle  
\left(\sum_{\{\{B_{i},B_m\} \in \NC:m < i\}}  \zeta_{{(B_i,B_m)}^\bullet_\bullet} \, \langle \, \zeta_{{(B_i,B_m)}^\bullet_\bullet}\, ,\cdot \, \rangle\right) +
\displaybreak[1] \\[3mm]
& \hspace*{-50mm} \displaystyle
\sqrt{\frac 12} \; \zeta_{{(B_m,B_\ell)}_\bullet^\bullet} \, \langle \, \zeta_{{(B_m,B_\ell)}_\bullet}\, ,\cdot \, \rangle \;
 \; +
\displaybreak[1] \\[3mm] 
& \hspace*{-62mm} \displaystyle  
\left(\sum_{\{(B_i,B_m) \in \NC:m >i, i \neq \ell\}} \zeta_{{(B_i,B_m)}_\bullet} \, \langle \, \zeta_{{(B_i,B_m)}_\bullet}\, ,\cdot \, \rangle\right)+
\displaybreak[1] \\[1mm]
& \hspace*{-62mm} \displaystyle  
\left(\sum_{\{(B_i,B_\ell) \in \NC: \ell <i, i \neq m\}} \zeta_{{(B_i,B_\ell)}^\bullet_\bullet} \, \langle \, \zeta_{{(B_i,B_\ell)}^\bullet_\bullet}\, ,\cdot \, \rangle\right)+
\displaybreak[1] \\[3mm]
& \hspace*{-62mm} \displaystyle  
\left(\sum_{\{(B_i,B_\ell) \in \NC:\ell >i\}} \zeta_{{(B_i,B_\ell)}_\bullet} \, \langle \, \zeta_{{(B_i,B_\ell)}_\bullet}\, ,\cdot \, \rangle\right)+
\displaybreak[1] \\[1mm]
& \hspace*{-62mm} \displaystyle  
\left(\sum_{\{A \in \NC: B_\ell,B_m\notin A\}} \zeta_{A_\bullet} \, \langle \, \zeta_{A_\bullet}\, ,\cdot \, \rangle +  \zeta_{A^\bullet} \, \langle \, \zeta_{A^\bullet}\, ,\cdot \, \rangle \right).
\end{align*}
Hence
$$O_m O_\ell - O_\ell O_m = {\frac 12} \left (\zeta_{{(B_m,B_\ell)}_\bullet} \, \langle \, \zeta_{{(B_m,B_\ell)}^\bullet}\, ,\cdot \, \rangle- \zeta_{{(B_m,B_\ell)}^\bullet} \, \langle \, \zeta_{{(B_m,B_\ell)}_\bullet}\, ,\cdot \, \rangle\right)\neq 0.$$
Therefore, $\{\udl{B}_m, \udl{B}_\ell\}$ is not a compatible set of quantum variables. \\[2mm]
($\to$) Assume that $(B_\ell,B_m)\notin \NC$. 
Then, 
\begin{align*}O_\ell=\left(\sum_{\{v_k: v_k(B_\ell)=1\}} \zeta_{v_k}\langle \, \zeta_{v_k}\, , \cdot\, \rangle \right) + \left(\sum_{\{v_k: v_k(B_\ell)=0\}} -\zeta_{v_k}\langle \, \zeta_{v_k}\, , \cdot\, \rangle\right) +\displaybreak[1] \\[1mm]
& \hspace*{-80mm} \displaystyle  
\left(\sum_{\{(B_i,B_\ell) \in \NC: \ell <i\}} 
\zeta_{{(B_i,B_\ell)}^\bullet_\bullet} \, \langle \, \zeta_{{(B_i,B_\ell)}^\bullet_\bullet}\, ,\cdot \, \rangle\right)+
\displaybreak[1] \\[3mm]
& \hspace*{-80mm} \displaystyle  
\left(\sum_{\{(B_i,B_\ell) \in \NC:\ell >i\}} \zeta_{{(B_i,B_\ell)}_\bullet} \, \langle \, \zeta_{{(B_i,B_\ell)}_\bullet}\, ,\cdot \, \rangle\right)+
\displaybreak[1] \\[1mm]
& \hspace*{-80mm} \displaystyle  
\left(\sum_{\{A \in \NC: B_\ell\notin A\}} \zeta_{A_\bullet} \, \langle \, \zeta_{A_\bullet}\, ,\cdot \, \rangle +  \zeta_{A^\bullet} \, \langle \, \zeta_{A^\bullet}\, ,\cdot \, \rangle\right)
\end{align*}
and
\begin{align*} O_m=\left(\sum_{\{v_k: v_k(B_m)=1\}} \zeta_{v_k}\langle \, \zeta_{v_k}\, , \cdot\, \rangle \right) + \left(\sum_{\{v_k: v_k(B_m)=0\}} -\zeta_{v_k}\langle \, \zeta_{v_k}\, , \cdot\, \rangle\right)  +
\displaybreak[1] \\[1mm]
& \hspace*{-80mm} \displaystyle  
\left(\sum_{\{(B_i,B_m) \in \NC: m <i\}} \zeta_{{(B_i,B_m)}^\bullet_\bullet} \, \langle \, \zeta_{{(B_i,B_m)}^\bullet_\bullet}\, ,\cdot \, \rangle\right)+
\displaybreak[1] \\[3mm]
& \hspace*{-80mm} \displaystyle  
\left(\sum_{\{(B_i,B_m) \in \NC:m >i\}} \zeta_{{(B_i,B_m)}_\bullet} \, \langle \, \zeta_{{(B_i,B_m)}_\bullet}\, ,\cdot \, \rangle\right)+
\displaybreak[1] \\[1mm]
& \hspace*{-80mm} \displaystyle  
\left(\sum_{\{A \in \NC: B_m\notin A\}} \zeta_{A_\bullet} \, \langle \, \zeta_{A_\bullet}\, ,\cdot \, \rangle +  \zeta_{A^\bullet} \, \langle \, \zeta_{A^\bullet}\, ,\cdot \, \rangle\right).
\end{align*}
Hence, 
\begin{align*}O_m O_\ell=\left(\sum_{\{v_k: v_k(B_\ell)+v_k(B_m)\neq 1\}} \zeta_{v_k}\langle \, \zeta_{v_k}\, , \cdot\, \rangle\right) +
\displaybreak[1] \\[1mm]
& \hspace*{-62mm} \displaystyle  
\left(\sum_{\{v_k: v_k(B_\ell)+v_k(B_m)=1\}} -\zeta_{v_k}\langle \, \zeta_{v_k}\, , \cdot\, \rangle\right) +
\displaybreak[1] \\[1mm]
& \hspace*{-62mm} \displaystyle  
\left(\sum_{\{(B_i,B_m) \in \NC: m <i\}} \zeta_{{(B_i,B_m)}^\bullet_\bullet} \, \langle \, \zeta_{{(B_i,B_m)}^\bullet_\bullet}\, ,\cdot \, \rangle\right)+
\displaybreak[1] \\[3mm]
& \hspace*{-62mm} \displaystyle  
\left(\sum_{\{(B_i,B_m) \in \NC:m >i\}} \zeta_{{(B_i,B_m)}_\bullet} \, \langle \, \zeta_{{(B_i,B_m)}_\bullet}\, ,\cdot \, \rangle\right)+
\displaybreak[1] \\[1mm]
& \hspace*{-62mm} \displaystyle  
\left(\sum_{\{(B_i,B_\ell) \in \NC: \ell <i\}} 
\zeta_{{(B_i,B_\ell)}^\bullet_\bullet} \, \langle \, \zeta_{{(B_i,B_\ell)}^\bullet_\bullet}\, ,\cdot \, \rangle\right)+
\displaybreak[1] \\[3mm]
& \hspace*{-62mm} \displaystyle  
\left(\sum_{\{(B_i,B_\ell) \in \NC:\ell >i\}} \zeta_{{(B_i,B_\ell)}_\bullet} \, \langle \, \zeta_{{(B_i,B_\ell)}_\bullet}\, ,\cdot \, \rangle\right)+
\displaybreak[1] \\[1mm]
& \hspace*{-62mm} \displaystyle  
\left(\sum_{\{A \in \NC: B_\ell,B_m\notin A\}} \zeta_{A_\bullet} \, \langle \, \zeta_{A_\bullet}\, ,\cdot \, \rangle +  \zeta_{A^\bullet} \, \langle \, \zeta_{A^\bullet}\, ,\cdot \, \rangle\right).
\end{align*}
On the other hand,
\begin{align*} O_\ell O_m=\left(\sum_{\{v_k: v_k(B_\ell)+v_k(B_m)\neq 1\}} \zeta_{v_k}\langle \, \zeta_{v_k}\, , \cdot\, \rangle\right) +
\displaybreak[1] \\[1mm]
& \hspace*{-62mm} \displaystyle  
\left(\sum_{\{v_k: v_k(B_\ell)+v_k(B_m)=1\}} -\zeta_{v_k}\langle \, \zeta_{v_k}\, , \cdot\, \rangle\right) +
\displaybreak[1] \\[1mm]
& \hspace*{-62mm} \displaystyle  
\left(\sum_{\{(B_i,B_m) \in \NC: m <i\}} \zeta_{{(B_i,B_m)}^\bullet_\bullet} \, \langle \, \zeta_{{(B_i,B_m)}^\bullet_\bullet}\, ,\cdot \, \rangle\right)+
\displaybreak[1] \\[3mm]
& \hspace*{-62mm} \displaystyle  
\left(\sum_{\{(B_i,B_m) \in \NC:m >i\}} \zeta_{{(B_i,B_m)}_\bullet} \, \langle \, \zeta_{{(B_i,B_m)}_\bullet}\, ,\cdot \, \rangle\right)+
\displaybreak[1] \\[1mm]
& \hspace*{-62mm} \displaystyle  
\left(\sum_{\{(B_i,B_\ell) \in \NC: \ell <i\}} 
\zeta_{{(B_i,B_\ell)}^\bullet_\bullet} \, \langle \, \zeta_{{(B_i,B_\ell)}^\bullet_\bullet}\, ,\cdot \, \rangle\right)+
\displaybreak[1] \\[3mm]
& \hspace*{-62mm} \displaystyle  
\left(\sum_{\{(B_i,B_\ell) \in \NC:\ell >i\}} \zeta_{{(B_i,B_\ell)}_\bullet} \, \langle \, \zeta_{{(B_i,B_\ell)}_\bullet}\, ,\cdot \, \rangle\right)+
\displaybreak[1] \\[1mm]
& \hspace*{-62mm} \displaystyle  
\left(\sum_{\{A \in \NC: B_\ell,B_m\notin A\}} \zeta_{A_\bullet} \, \langle \, \zeta_{A_\bullet}\, ,\cdot \, \rangle +  \zeta_{A^\bullet} \, \langle \, \zeta_{A^\bullet}\, ,\cdot \, \rangle \right).
\end{align*}
Therefore, 
$$O_m  O_\ell -  O_\ell  O_m = 0.$$
Therefore, $\{\udl{B}_m, \udl{B}_\ell\}$ is a compatible set of quantum variables. 
\end{proof}

We now proceed towards the weak completeness of the calculus. We start by proving a finite dimensional model existence lemma showing that we
can move back and forth between satisfaction of certain $\RCOF$ formulas  and satisfaction of $\PLQO$ formulas.

\begin{prop} \em \label{prop:orcftostoval}
Let $\beta_1, \dots,\beta_k$ be formulas of $\PLQO$ of the form $\lO \alpha$, $\lneg\lO \alpha$, $\inte{\alpha}\mycirc p$ and $\rho$ be an assignment over $\reals$ such that 
$$\reals \rho \satfo Q^{\inte{\alpha_j}},$$ for each $j$ such that $\beta_j$ is either of the form $\lO \alpha_j$ or 
$\inte{\alpha_j} \mycirc p_j$.
Then, there is a finite dimensional quantum structure $I$ such that 
$$I\rho \sat \bigwedge_{j=1}^k \beta_j \qquad  \text{iff}\qquad  \reals \rho \satfo \bigwedge_{j=1}^k \beta_j^{\RCOF}.$$
\end{prop}
\begin{proof}  Consider two cases:\\
(1)~There is $j =1,\dots,k$ such $\beta_j$ is either of the form $\lO \alpha_j$ or 
$\inte{\alpha_j} \mycirc p_j$.
Let $B'$ is $\cup_{j=1}^k B_{\beta_j}$. 
Assume without loss of generality that
$B'$ is the set $\{B_1,\dots, B_{|B'|}\}$.
Consider the quantum structure
$$I=I^{B',\NC,f}$$
where
$$\NC=\{(B_i,B_j): \rho(x_{\{B_i,B_j\}}) \neq 0, B_i, B_j \in B', i \neq j\}$$
and $$f(j)=  \sqrt{\rho(x_{\phi_{B'}^{U_j}})}$$
where $U_j=\{B_i \in B': v_j(B_i)=1\}$
for each $j= 0,\dots,{2^{|B'|}-1}$ and each valuation $v_j$ is as defined in $I^{B',\NC,f}$.
\\[2mm]
We now prove that $$I\rho \sat \beta_j \quad  \text{iff}\quad  \reals \rho \satfo \beta_j^{\RCOF}$$
for each $j=1,\dots,k$.\\[2mm]
There are three cases to consider:\\[1mm]
(a)~$\beta_j$ is $\lO \alpha$. Recall that $(\lO \alpha)^{\RCOF}$ is 
$Q^{\lO \alpha} \lconj {Q^{{\inte{}}{\alpha}}}$.\\[1mm]
$(\from)$ Assume that $\reals \rho \satfo Q^{\lO \alpha} \lconj {Q^{{\inte{}}{\alpha}}}$. 
 Hence, $\reals \rho \satfo x_{A'} =0$ for every $A' \subseteq B_{\alpha}$ with $|A'|=2$ since
$\reals \rho \satfo Q^{\lO \alpha}$.  
We must show that
$\udl{\alpha}=\{\udl{B}_j: B_j \in B_{\alpha}\}$ is a compatible set of quantum variables.  Let
$\{\udl{B}_i,\udl{B}_j\}$ be a set of quantum variables  in $\udl{\alpha}$. Observe that $(B_i,B_j) \notin \NC$.
Thus,  by Proposition~\ref{prop:gstructnonc}, $\{\udl{B}_i,\udl{B}_j\}$ is a compatible set of quantum variables.
Thus, $I \rho \sat \lO \alpha$.\\[2mm]
$(\to)$ Assume that $\reals \rho \nsatfo (\lO \alpha)^{\RCOF}$. Then, 
$\reals \rho \nsatfo Q^{\lO \alpha}$ since, by hypothesis, $$\reals \rho \satfo Q^{\inte{\alpha}}.$$ 
Then, there is $\{B_\ell,B_m\} \subseteq B_{\alpha}$ such that
$$\rho(x_{\{B_\ell,B_m\}}) \neq 0.$$
Hence, $(B_\ell,B_m) \in \NC$. Thus,  by Proposition~\ref{prop:gstructnonc},  $\{\udl{B}_m, \udl{B}_\ell\}$ is not a compatible set of quantum variables. Therefore, also $\udl{\alpha}$ 
is not a compatible set of quantum variables and so $\alpha$ is not an $I$-observable formula. Thus,
$I \rho \not \sat \lO \alpha$.
\\[2mm]
(b)~$\beta_j$ is $\inte{\alpha} \mycirc p$. Recall that $(\inte{\alpha} \mycirc p)^{\RCOF}$ is $Q^{\lO \alpha} \lconj {Q^{{\inte{}}{\alpha}}} \lconj (x_{\alpha} \mycirc p)$. Observe that
\begin{align*}  
 \Prob_I(\alpha) =  \sum_{\text{\shortstack[c]
{$v: B_{\alpha} \to \{0,1\}$\\
$v\satc\alpha$
}
}} \left(\sum_{{u} \in U_v} \langle \psi{,}u \rangle^2\right) \qquad  \qquad (*)
					\displaybreak[1] \\[1mm]
& \hspace*{-76mm} \displaystyle 
= \sum_{\text{\shortstack[c]
{$v: B_{\alpha} \to \{0,1\}$\\
$v\satc\alpha$
}
}}  \left(\sum_{\text{\shortstack[c]
{$v': B' \to \{0,1\}$\\
$v'|_{B_\alpha}= v$
}
}} \langle \psi{,}\, \zeta_{v'} \rangle^2\right) \qquad  \qquad (**)
\displaybreak[1] \\[1mm]
& \hspace*{-76mm} \displaystyle 
= \sum_{\text{\shortstack[c]
{$v: B_{\alpha} \to \{0,1\}$\\
$v\satc\alpha$
}
}}  \left(\sum_{\text{\shortstack[c]
{$v': B' \to \{0,1\}$\\
$v'|_{B_\alpha}= v$
}
}} \rho(x_{\phi_{B'}^{\{B_k \in B': v'(B_k)=1\}}})\right) 
\displaybreak[1]\\[1mm]
& \hspace*{-76mm} \displaystyle 
= \sum_{\text{\shortstack[c]
{$v: B_{\alpha} \to \{0,1\}$\\
$v\satc\alpha$
}
}} \rho(x_{\phi^{\{B_j \in B_{\alpha}: v(B_j)=1\}}_{B_{\alpha}}})
\displaybreak[1]\\[1mm]
& \hspace*{-76mm} \displaystyle 
= \rho(x_{\alpha})
\end{align*}
since $\reals \rho \satfo {Q^{{\inte{}}{\alpha}}}$,  (*) holds by Proposition~\ref{prop:probIatr} and (**)  holds since 
$$U_v \cap \{\zeta_{v_0},\dots,\zeta_{v_{2^{|B'|}-1}}\} =\{v': v': B' \to \{0,1\} \text{ and }
v'|_{B_\alpha}=v\}$$ and 
 $$\langle \psi{,}u \rangle =0$$ for each 
 ${u} \in \bigcup_{A \in \NC} \{\zeta_{A^\bullet},\zeta_{A_\bullet}\}$.\\[3mm]
$(\from)$ Assume that $\reals \rho \satfo \beta_j^{\RCOF}$. 
Therefore,
$$\reals \rho \satfo Q^{\lO \alpha} \lconj {Q^{{\inte{}}{\alpha}}} \lconj (x_{\alpha} \mycirc p)$$ and following a similar reasoning to the one in (a), we can conclude
that $I \rho \sat \lO \alpha$. Moreover,
$\rho(x_{\alpha})\mycirc p^{\reals \rho}$.
 Then, 
 $$\Prob_I(\alpha) 
= \rho(x_{\alpha})
\mycirc p^{\reals \rho}.$$
So $I\rho \sat \beta_j$.\\[2mm]
$(\to)$ Assume that $\reals \rho \nsatfo \beta_j^{\RCOF}$. Since, by hypothesis, $\reals \rho \satfo {Q^{{\inte{}}{\alpha}}}$, there are only two cases to consider.\\[1mm]
(i)~$\reals \rho \nsatfo Q^{\lO \alpha}$. Then, by (a), $I\rho \not \sat \lO \alpha$ and so
$I \rho \not \sat \beta_j$.\\[1mm]
(ii)~$\reals \rho \nsatfo x_{\alpha}\mycirc p$. 
Hence,
$$\Prob_I(\alpha) 
= \rho(x_{\alpha})
\nmycirc p^{\reals \rho}.$$
Therefore, $I \rho \not \sat \beta_j$.\\[2mm]
(c)~$\beta_j$ is $\lneg \lO \alpha$. Recall that $(\lneg \lO \alpha)^{\RCOF}$ is 
$\lneg Q^{\lO \alpha}$.
Hence, 
 $$
\begin{array}{cl}
\reals \rho \satfo (\lneg \lO \alpha)^{\RCOF} \\[1mm]
      \text{iff} & \\[1mm]
     \reals \rho \satfo \lneg Q^{\lO \alpha}\\[1mm]
           \text{iff} & \\[1mm]
                \reals \rho \satfo \lneg (Q^{\lO \alpha} \lconj   {Q^{{\inte{}}{\alpha}}})\\[1mm]
           \text{iff} & \\[1mm]
                \reals \rho \satfo \lneg (\lO \alpha)^{\RCOF}\\[1mm]
                           \text{iff} & \\[1mm]
                \reals \rho \nsatfo  (\lO \alpha)^{\RCOF}\\[1mm]

           \text{iff} & \\[1mm]
         I\rho  \not \sat \lO \alpha \\[1mm]
      \text{iff} & \\[1mm]
      I\rho \sat \lneg \lO \alpha.
\end{array}
$$
Therefore,  
$$
\begin{array}{cl}
\reals \rho \satfo \beta_1 \lconj \dots \lconj \beta_{k}  \\[1mm]
      \text{iff} & \\[1mm]
     \reals \rho \satfo \beta_1 \quad \dots \quad \reals \rho\satfo \beta_{k}\\[1mm]
           \text{iff} & \\[1mm]
         I\rho  \sat \beta_1  \quad \dots \quad I \rho\satfo \beta_{k}\\[1mm]
      \text{iff} & \\[1mm]
      I\rho \sat \beta_1 \lconj \dots \lconj \beta_{k}
\end{array}
$$
as we wanted to show.
\end{proof}

\begin{pth} \em
The logic $\PLQO$ is weakly complete.
\end{pth}
\begin{proof}\ \\
Let $\varphi \in L_\PLQO$. 
Assume that $\not \der \varphi$. We proceed to show that $\not \ent \varphi$.
First observe that $\lneg\varphi$ must be consistent 
because otherwise from $\lneg\varphi$ one would be able to derive every formula, including in particular, 
$ \varphi$, and so $\der (\lneg \varphi) \limp \varphi$, and in that case one would have $\der\varphi$. 
Observe that there are conjunctions  $\eta_1,\dots,\eta_\ell$ of literals of the form $\lO \alpha$, $\lneg\lO \alpha$ and $\inte{\alpha}\mycirc p$ such that 
$$\der (\lneg \varphi) \displaystyle \leqv \bigvee_{i=1}^\ell \eta_i.$$ Since $\lneg\varphi$ is consistent, at least one of the disjuncts must also be consistent.
Let $\eta_m$ be one such consistent disjunct. Assume that $\eta_m$ is $$\beta_1\lconj \dots \lconj\beta_k$$
where each $\beta_j$ is a literal.
For proving that $\not \ent \varphi$ it is enough to show that $\lneg\varphi$ is satisfiable. 
Hence, it is enough to show that there is one satisfiable disjunct. Indeed, $\eta_m$ is satisfiable as we proceed to prove.
Towards a contradiction, assume that there are no $I$ and $\rho$ such that 
$I \rho \sat \eta_m$ holds.
Then, by Proposition~\ref{prop:orcftostoval}, there would not exist $\rho$ such that
$$\reals \rho \satfo   \bigwedge_{j: \,\beta_j \text{is either} \lO\alpha_j \text{or}\inte{}\! \alpha_j\mycirc p_j }Q^{\inte{}\alpha_j} \; \lconj \; \bigwedge_{j=1}^k \beta_j^{\RCOF}.$$
So, there would not exist $\rho$ such that
$$\reals \rho \satfo   \bigwedge_{j=1}^k \beta_j^{\RCOF}.$$
Hence, we would have 
$$\ds\bigforall
		\left(\left(\bigwedge_{j=1}^k \beta_j^{\RCOF}\right)
					\limp \lfalsum^\RCOF \right)\in \\[1mm] \hspace*\fill \RCOF.$$
Then, by $\RR$, we would establish $$\eta_m \der \lfalsum$$ 
in contradiction with the consistency of $\eta_m$.
\end{proof}

\section{Conservativeness}\label{sec:cons}

In this section, we show that $\PLQO$ is a conservative extension of classical propositional logic modulo a translation
of classical propositional formulas  into the language of $\PLQO$.

\begin{prop} \em
Let $\alpha \in \Lc$. Then
$$\lO \alpha \ent \inte{\alpha} =1 \quad \quad \text{ iff } \quad \quad \entc \alpha.$$
\end{prop}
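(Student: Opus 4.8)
The statement is a biconditional relating a semantic entailment in $\PLQO$ with classical tautologyhood. I would prove the two directions separately, with the $(\Leftarrow)$ direction being essentially soundness of the probability axioms and the $(\Rightarrow)$ direction requiring the construction of a suitable counter-model.

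\textbf{Direction $(\Leftarrow)$: $\entc\alpha$ implies $\lO\alpha \ent \inte{\alpha}=1$.} Suppose $\entc\alpha$ and let $I$ and $\rho$ be an arbitrary quantum structure and assignment with $I\rho \sat \lO\alpha$. Then $\alpha$ is $I$-observable, so by the definition of satisfaction for $\inte{\alpha}=p$ it suffices to check $\Prob_I(\alpha) = 1^{\reals\rho} = 1$. But $\Prob_I$ is a probability assignment by Proposition~\ref{prop:probass}, so principle P2 gives $\Prob_I(\alpha)=1$ directly from $\entc\alpha$. Hence $I\rho \sat \inte{\alpha}=1$, and since $I,\rho$ were arbitrary, $\lO\alpha \ent \inte{\alpha}=1$.

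\textbf{Direction $(\Rightarrow)$: contrapositive.} Assume $\not\entc\alpha$; I will exhibit a quantum structure $I$ and assignment $\rho$ with $I\rho \sat \lO\alpha$ but $I\rho \not\sat \inte{\alpha}=1$, i.e. $\Prob_I(\alpha) \neq 1$. Since $\alpha$ is not a tautology, $\eB_\alpha \neq \emptyset$ (by the remarks in Section~\ref{sec:plqo}), and there is a valuation $v_0$ on $\eB_\alpha$ with $v_0 \not\satc \alpha$. The natural choice is one of the generic structures $I^{B',\NC,f}$ introduced in Section~\ref{sec:wcomp}: take $B' = \eB_\alpha$ (finite, and $B_\alpha \supseteq \eB_\alpha$ — one should be slightly careful and take $B'$ large enough to contain $B_\alpha$, choosing the truth values on $B_\alpha \setminus \eB_\alpha$ freely), take $\NC = \emptyset$ so that all the $\udl B_j$ pairwise commute and hence $\alpha$ is $I$-observable (giving $I\rho \sat \lO\alpha$ for any $\rho$, via Proposition~\ref{prop:gstructnonc}), and choose $f$ supported so as to put nonzero mass on the valuation vector $\ket{v_0}$ extending $v_0$ — for instance, spread mass evenly over all valuations, or simply concentrate all mass on $\ket{v_0}$ by setting $f(j_0)=1$ and $f(j)=0$ otherwise, where $j_0$ indexes $v_0$. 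Then by Proposition~\ref{prop:probIatr}, $\Prob_I(\alpha) = \sum_{v \satc\alpha}\sum_{\ket u \in U_v}\langle\psi\mid u\rangle^2$; since with $\NC=\emptyset$ the eigenvectors of the $O_j$ restricted to the span of the valuation vectors are exactly the $\ket{v_k}$, one has $U_v = \{\ket{v}\}$ for the valuation $v$ with $\ket v$ in the chosen basis, so $\Prob_I(\alpha) = \sum_{v\satc\alpha} |\langle\psi\mid v\rangle|^2$. With all mass on $\ket{v_0}$ and $v_0 \not\satc\alpha$, this sum is $0 \neq 1$, so $I\rho \not\sat \inte{\alpha}=1$, completing the contrapositive.

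\textbf{Main obstacle.} The $(\Leftarrow)$ direction is immediate from Proposition~\ref{prop:probass}. The real work is the $(\Rightarrow)$ direction, and the only delicate point there is the bookkeeping in applying Propositions~\ref{prop:gstructnonc} and~\ref{prop:probIatr} to the generic structure $I^{B',\NC,f}$: one must verify that with $\NC=\emptyset$ the sets $U_v$ (for $v$ a valuation on $B_\alpha$, computed against the orthonormal basis of valuation vectors together with the extra $\ket{\ovl A},\ket{\udl A}$ vectors, of which there are none) reduce exactly to the singletons $\{\ket{v'}\}$ for the extensions $v'$ of $v$ to $B'$, so that $\Prob_I(\alpha) = \sum_{v' \text{ on } B',\, v'|_{B_\alpha}\satc\alpha} |f(\text{index of }v')|^2$; this is precisely the computation labelled $(**)$ in the proof of Proposition~\ref{prop:orcftostoval}, so it can be quoted. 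Given $\not\satc\alpha$ for the chosen valuation and a mass assignment $f$ avoiding full weight on satisfying valuations, one gets $\Prob_I(\alpha)<1$ as required.
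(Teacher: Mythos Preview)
Your proposal is correct and follows essentially the same route as the paper. Both directions match: the $(\Leftarrow)$ direction is identical (invoke Proposition~\ref{prop:probass}, principle P2), and for $(\Rightarrow)$ both you and the paper build the generic structure $I^{B',\emptyset,f}$ with $\NC=\emptyset$ so that $\lO\alpha$ holds via Proposition~\ref{prop:gstructnonc}, then read off $\Prob_I(\alpha)$ from Proposition~\ref{prop:probIatr}. The only cosmetic differences are that the paper argues $(\to)$ directly rather than by contrapositive, takes $B'=B_\alpha$ from the start, and uses the \emph{uniform} mass $f(j)=\sqrt{1/2^{|B_\alpha|}}$ (so that $\Prob_I(\alpha)=1$ forces every valuation to satisfy $\alpha$), whereas you concentrate mass on a single non-satisfying valuation to get $\Prob_I(\alpha)=0$; both choices do the job. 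One small remark: the aside ``$\eB_\alpha\neq\emptyset$'' is not actually needed for your argument (and is in fact false for contradictions), but since you then take $B'\supseteq B_\alpha$ and work with a non-satisfying valuation on $B_\alpha$, nothing in your proof depends on it.
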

\begin{proof}\ \\
($\to$) Assume that $\lO \alpha \ent \inte{\alpha} =1$. 
Consider the quantum structure
$$I= I^{B_{\alpha},\emptyset,f}$$
where $f(j)= \sqrt{\frac{1}{2^{|B_{\alpha}|}}}$ for every $j=0,\dots, 2^{|B_{\alpha}|}-1$, and an arbitrary assignment $\rho$ over
$\reals$.
Thus,  by Proposition~\ref{prop:gstructnonc},  $\{\udl{B}_m, \udl{B}_\ell\}$ is a compatible set of quantum variables for $I$
for every $\udl{B}_m, \udl{B}_\ell \in \udl{\alpha}$. Therefore,
$$I \rho \sat \lO \alpha.$$ 
Hence, by the hypothesis, 
 $I \rho \sat \inte{\alpha} =1$. Then, $\Prob_I(\alpha)=1$. Observe that, by Proposition~\ref{prop:probIatr},
$$\Prob_I(\alpha)= \sum_{\text{\shortstack[c]
{$v: B_{\alpha} \to \{0,1\}$\\
$v\satc\alpha$
}
}} \left(\sum_{{u} \in U_v} \langle \psi{,}u \rangle^2\right)$$
and that
$$\sum_{{u} \in U_v} \langle \psi{,}u \rangle^2= \frac{1}{2^{|B_{\alpha}|}},$$
for each $v:B_\alpha \to \{0,1\}$, because $U_v=\{\zeta_{v}\}$.
Thus, since 
$$\sum_{\text{\shortstack[c]
{$v: B_{\alpha} \to \{0,1\}$\\
$v\satc\alpha$
}
}} \left(\sum_{{u} \in U_v} \langle \psi{,}u \rangle^2\right)=1$$
we have that $\{v: v:B_\alpha \to \{0,1\}\}=\{v: v:B_\alpha \to \{0,1\}, v \satc \alpha\}$.
Therefore, $\alpha$ is a tautology.
\\[2mm]
($\from$) Assume that $\entc\alpha$. Let $I$ be a quantum structure and $\rho$ an arbitrary assignment over $\reals$.
Assume $I \rho \sat \lO \alpha$. Since $\Prob_I$ is a probability assignment by Proposition~\ref{prop:probass},
$\Prob_I(\alpha)=1$ and so $I \rho \sat \inte{\alpha} =1$.
\end{proof}

Then, writing 
$$\text{$\alpha^*$ \; for \; $(\lO \alpha) \limp (\inte{\alpha} =1)$},$$
for the translation of classical formula $\alpha$ into the language of $\PLQO$,
the following corollary holds.

\begin{prop} \em
Let $\alpha \in \Lc$. Then
$$\ent \alpha^* \quad \text{ iff } \quad \entc \alpha.$$
\end{prop}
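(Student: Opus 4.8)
The plan is to recognize that this statement is just a restatement of the preceding proposition once the outer connective of $\alpha^*$ is unfolded. First I would write out what $\ent\alpha^*$ asserts: for every quantum structure $I$ and every assignment $\rho\colon X\to\reals$, $I\rho\sat(\lO\alpha)\limp(\inte{\alpha}=1)$. By the satisfaction clause for $\limp$, this is equivalent to saying that $I\rho\not\sat\lO\alpha$ or $I\rho\sat\inte{\alpha}=1$, i.e.\ that if $I\rho\sat\lO\alpha$ then $I\rho\sat\inte{\alpha}=1$.

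Next I would observe that, quantified over all $I$ and $\rho$, this last condition is precisely the definition of the entailment $\lO\alpha\ent\inte{\alpha}=1$. Hence $\ent\alpha^*$ holds if and only if $\lO\alpha\ent\inte{\alpha}=1$ holds. The preceding proposition states that $\lO\alpha\ent\inte{\alpha}=1$ if and only if $\entc\alpha$, so composing the two equivalences gives $\ent\alpha^*$ if and only if $\entc\alpha$, which is the claim.

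I do not anticipate any real obstacle: the only step is the trivial passage between the validity of an implication and one-premise entailment, valid here because the semantics of $\limp$ in $\PLQO$ treats $\varphi_1\limp\varphi_2$ worldwise. All the content lives in the preceding proposition, whose forward direction uses the finite-dimensional witness $I^{B_\alpha,\emptyset,f}$ with uniform weights $f(j)=\sqrt{1/2^{|B_\alpha|}}$ (forcing each valuation of $B_\alpha$ to have probability $2^{-|B_\alpha|}$, so $\Prob_I(\alpha)=1$ can only happen when every valuation satisfies $\alpha$), and whose converse uses that $\Prob_I$ is a probability assignment (Proposition~\ref{prop:probass}), so $\entc\alpha$ forces $\Prob_I(\alpha)=1$.
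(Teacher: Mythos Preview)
Your proposal is correct and matches the paper's treatment: the paper presents this proposition as an immediate corollary of the preceding one, without a separate proof, and your unfolding of $\alpha^*$ together with the standard passage from validity of $\varphi_1\limp\varphi_2$ to the entailment $\varphi_1\ent\varphi_2$ is exactly the intended argument. Your summary of how the preceding proposition is proved (the uniform-weight quantum structure $I^{B_\alpha,\emptyset,f}$ for the forward direction and Proposition~\ref{prop:probass} for the converse) is also accurate.
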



\section{Epistemic nature of observability}\label{sec:epistemic}

We now analyse the epistemic aspects of quantum reasoning of observability in $\PLQO$ following the steps in~\cite{acs:15}. Recall that $\lO\alpha$ intuitively means that it is possible to do a simultaneous measurement over the essential propositional symbols of $\alpha$ in order to know its  truth value. With this  in mind, we discuss the behaviour of $\lO$ with respect to the propositional connectives. 

We start by considering the law 
$$((\lO\alpha_1) \lconj (\lO\alpha_2))  \leqv (\lO(\alpha_1 \lconj \alpha_2))$$
which is characteristic of an epistemic operator (see for instance~\cite{mey:95}). Observe that
there are classical formulas $\alpha_1$ and $\alpha_2$ such that 
$$\not \ent ((\lO\alpha_1) \lconj (\lO\alpha_2))  \leqv (\lO(\alpha_1 \lconj \alpha_2)).$$
Indeed, consider the quantum structure $I_1$ defined in Example~\ref{ex:1} and let $\alpha_1$ be $B_1$ and $\alpha_2$ be $B_2$.

\begin{example}\em \label{ex:1}
Recall Example~\ref{ex:spin1}. 
Let
$$I=(\comps^2, \psi, B_j \mapsto \udl{B}_j)$$
be the quantum structure 
where 
\begin{itemize}

\item $\udl{B}_1$ is a propositional quantum variable such that $O_1$ is the observable $O_x$ for the $x$ component of a spin-$\frac 12$ particle;

\item $\udl{B}_2$ is a propositional quantum variable such that $O_2$ is the observable $O_y$ for the $y$ component of a spin-$\frac 12$ particle;

\item $\udl{B}_3$ is a propositional quantum variable such that $O_3$ is the observable $O^2$ for the total angular momentum of a spin-$\frac 12$ particle.
\end{itemize}
Hence,  
$\udl{B}_1$ and $\udl{B}_2$ are not compatible. Therefore, 
 $$I \not \sat \lO (B_1 \lconj B_2).$$ 
 On the other hand,
 $I  \sat \lO (B_1)$ and $I_1  \sat \lO (B_2)$ and so 
 $$I \not \sat ((\lO B_1) \lconj (\lO B_2))  \leqv (\lO(B_1 \lconj B_2)).$$
However, 
$$I \sat ((\lO B_1) \lconj (\lO B_3))  \leqv (\lO(B_1 \lconj B_3))$$
since $\udl{B}_1$ and $\udl{B}_3$ are compatible.

Finally, there is no surprise at all due to the intended meaning of $\lO$  that the following
distributive law 
$$(\lO (\alpha_1 \lconj (\alpha_2 \ldisj \alpha_3))) \leqv ((\lO(\alpha_1 \lconj \alpha_2) \ldisj (\lO(\alpha_1 \lconj \alpha_3)))$$
does not hold. 
In fact, 
$$I \not \sat (\lO (B_3 \lconj (B_1 \ldisj B_2))) \leqv ((\lO(B_3 \lconj B_1) \ldisj (\lO(B_3 \lconj B_2)))$$
holds. \curtains
\end{example}

In the same vein, we can show that 
$$((\lO\alpha_1) \ldisj (\lO\alpha_2))  \leqv (\lO(\alpha_1 \ldisj \alpha_2))$$
is satisfiable for some classical formulas $\alpha_1$ and $\alpha_2$ but does not hold in general, and the same for the law 
$$((\lO\alpha_1) \limp (\lO\alpha_2))  \limp (\lO(\alpha_1 \limp \alpha_2)).$$

Concerning other laws of epistemic logic, note that the introspection axioms cannot be considered in our setting since the language of 
$\PLQO$  does not allow formulas  with nested $\lO$ operators. On the other hand, the  knowledge axiom 
$$(\lO \alpha) \limp \alpha^*$$
holds 
in general for any tautology $\alpha$.

Finally, observe that the formula 
$$(\lO \alpha) \leqv (\lO (\lneg \alpha))$$
holds in general since 
$\eB_\alpha = \eB_{\lneg \alpha}$. Indeed, 
assume that $B_j \in \eB_\alpha$. Then, for every valuation $v$ on $B_\alpha$, 
$$v^{B_j \mapsfrom 0} \satc \alpha \quad \text{ iff } \quad v^{B_j \mapsfrom 1} \not \satc \alpha.$$
Hence,
$$v^{B_j \mapsfrom 0} \satc \lneg \alpha \quad \text{ iff } \quad v^{B_j \mapsfrom 1} \not \satc \lneg \alpha.$$
taking into account $v$ is also a valuation on $B_{\lneg \alpha}$. So, $B_j \in \eB_{\lneg \alpha}$.
Similarly, for the other inclusion.

A similar epistemic behaviour occurs for example for the operator $\lM$ defined
as an abbreviation of $\inte{\alpha}>0$. The intuitive meaning of 
$\lM \alpha$ is that $\alpha$ is observable and possibly true.
Since, $\lM\alpha$ can be seen as a conjunction involving $\lO \alpha$,  then, as a consequence of the previous 
discussion, it is immediate to extrapolate its behaviour  with respect to  the propositional connectives. 
 On the other hand,
 $$\not \ent (\lM \alpha) \leqv (\lM (\lneg \alpha))$$
when $\alpha$ is a tautology. In fact, when $\alpha$ is a tautology, $\lM \alpha$ holds but $\lM (\lneg \alpha)$ does not hold
 since although $\lO (\lneg \alpha)$ holds we have $\inte{\lneg \alpha}=0$. However,
 $\ent (\lM \alpha) \leqv (\lM (\lneg \alpha))$  when both $\alpha$ and $\lneg \alpha$ are not tautologies.

\section{Concluding remarks}\label{sec:concrems}

Probabilistic reasoning about quantum observability was introduced into classical propositional logic by developing the logic $\PLQO$. 
 The semantics of $\PLQO$ was given by quantum structures each one composed by a Hilbert space representing the quantum system together with a unit vector representing the current state plus the interpretation of each propositional symbol as a  propositional quantum variable over a diagonizable and bounded observable. The logic was axiomatized by relying on the decidable theory of real closed ordered fields and shown to be  strongly sound and weakly complete. Furthermore, $\PLQO$ was proved to be a conservative extension of classical propositional logic (modulo a suitable translation). The epistemic nature of the observability operator $\lO$ was discussed. Although $\lO$ satisfies at least in part some epistemic properties, it is of a different nature of other quantum-like epistemic operators, like for instance those in~\cite{sme:10}, defined for different purposes. 
 
Concerning future work, we intend to extend the logic to cope with unbounded operators. Moreover,
we intend to investigate the consequence of discarding the assumption that simultaneous measurements imply compatibility of observables.

Furthermore, it  seems worthwhile to study  
other meta-properties of $\PLQO$ starting with decidability and complexity
of the decision problem. We expect this complexity to be much lower than the complexity of $\RCOF$ theoremhood since we only need to recognize $\RCOF$ theorems of a very simple clausal form. 

Strong completeness of the $\PLQO$ axiomatization is not possible
because $\reals$ is an Archimedean real closed ordered field which led to a non compact entailment. Relaxing the semantics may open the door to establishing strong completeness.  


Finally, it seems worthwhile to analyze the observability operator $\lO$ as an analog of the consistency operator in some 
paraconsistent and paracomplete logics~\cite{car:07}. 

\section*{Acknowledgments}
 
This work was supported by Fundação para a Ciência e a Tecnologia by way of grant UID/MAT/04561/2013 to 
Centro de Matemática, Aplicações Fundamentais e Investigação Operacional of Universidade de Lisboa (CMAF-CIO), grant  
UID/FIS/00099/2013 to Centro Multidisciplinar de Astrofísica and grant UID/EEA/50008/2013 to Instituto de Telecomunicações.




\end{document}

